\newtheorem{theorem}{Theorem}[section]
\newtheorem{proposition}[theorem]{Proposition}
\newtheorem{lemma}[theorem]{Lemma}
\newtheorem{corollary}[theorem]{Corollary}
\newtheorem{conjecture}[theorem]{Conjecture}
\newtheorem{observation}[theorem]{Observation}
\newtheorem{problem}[theorem]{Problem}
\newtheorem{definition}[theorem]{Definition}
\newcommand{\T}{{\mathcal T}}
\newcommand{\eps}{{\varepsilon}}
\begin{document}
	
\title{
	On tournament inversion
}
	
\author{
	Raphael Yuster
	\thanks{Department of Mathematics, University of Haifa, Haifa 3498838, Israel. Email: raphael.yuster@gmail.com\,.}
}
	
\date{}
	
\maketitle
	
\setcounter{page}{1}
	
\begin{abstract}
	An {\em inversion} of a tournament $T$ is obtained by reversing the direction of all edges with both endpoints in some set of vertices. Let ${\rm inv}_k(T)$ be the minimum length of a sequence of inversions using sets of size at most $k$ that result in the transitive tournament. Let ${\rm inv}_k(n)$ be the maximum of ${\rm inv}_k(T)$ taken over $n$-vertex tournaments.
	It is well-known that ${\rm inv}_2(n)=(1+o(1))n^2/4$ and it was recently proved by Alon et al.\! that ${\rm inv}(n):={\rm inv}_{n}(n)=n(1+o(1))$. In these two extreme cases ($k=2$ and $k=n$), random tournaments are extremal objects. It is proved that ${\rm inv}_k(n)$ is {\em not} attained by random tournaments when
	$k \ge k_0$ and conjectured that ${\rm inv}_3(n)$ is (only) attained by (quasi) random tournaments.
	It is further proved that
	$(1+o(1)){\rm inv}_3(n)/n^2 \in [\frac{1}{12}, 0.0992)$ and
	$(1+o(1)){\rm inv}_k(n)/n^2 \in [\frac{1}{2k(k-1)}+\delta_k, \frac{1}{2 \lfloor k^2/2 \rfloor}-\epsilon_k]$
	where $\epsilon_k > 0$ for all $k \ge 3$ and $\delta_k > 0$ for all $k \ge k_0$.
%\vspace*{3mm}
%\noindent
%{\bf AMS subject classifications:} 05C35\\
%{\bf Keywords:} tournament; inversion; triangle

\end{abstract}

\section{Introduction}

In this paper we mainly consider oriented graphs, which are digraphs without loops, digons, or parallel edges.
In particular, we consider tournaments, which are oriented complete graphs.
For an oriented graph $D$ and a set $X \subseteq V(D)$, the {\em inversion} of $X$ in $D$ is the oriented graph obtained from
$D$ by reversing the direction of the edges with both endpoints in $X$; synonymously, we view an inversion as an {\em operation} on $D$ and say that we {\em invert} $X$ in $D$.
Notice that inverting a sequence $X_1,\ldots,X_t$ results in the same oriented graph for any permutation of that sequence. If inverting a sequence $X_1,\ldots,X_t$ results in an acyclic digraph, we say that
$\{X_1,\ldots,X_t\}$ forms a {\em decycling set} of $D$. The {\em inversion number of $D$}, denoted
${\rm inv}(D)$, is the minimum size of a decycling set. If each element of a decycling set has size at most $k$,
we say that the set is {\em $k$-decycling} and let ${\rm inv}_k(D)$ denote the minimum size of a $k$-decycling set. We observe that ${\rm inv}(D)={\rm inv}_{n}(D)$ where $|V(D)|=n$, and that ${\rm inv}_2(D)$ is the
size of a minimum feedback edge set of $D$.
The extremal parameters of interest are ${\rm inv}(n)={\rm inv}_n(n)$ and ${\rm inv}_k(n)$ which, respectively, are the maximum of ${\rm inv}(D)$ and ${\rm inv}_k(D)$ taken over all oriented graphs with $n$ vertices.
When studying these extremal parameters, we may and will restrict to the class of $n$-vertex tournaments,
as adding edges to $D$ cannot decrease ${\rm inv}_k(D)$.

The parameter ${\rm inv}_2(n)$ is asymptotically well-understood.
It is straightforward that any digraph can be made acyclic by inverting (equivalently, removing) at most half of its edges. Spencer \cite{spencer-1971} proved that ${\rm inv}_2(n) \le \frac{1}{2}\binom{n}{2} - \Omega(n^{3/2})$.
A probabilistic construction of Spencer \cite{spencer-1980}, later simplified
with an improved constant by de la Vega \cite{delavega-1983}, shows that
${\rm inv}_2(n) \ge \frac{1}{2}\binom{n}{2} - O(n^{3/2})$, hence ${\rm inv}_2(n)=(1+o(1))n^2/4$ and the growth rate below the $n^2/4$ threshold is $\Theta(n^{3/2})$.
In fact, it was shown by Chung and Graham \cite{CG-1991} that an $n$-vertex tournament $T$  has ${\rm inv}_2(T)=(1+o(1))n^2/4$ if and only if it is quasi-random \footnote{More formally, when discussing quasi-randomness we need to consider infinite {\em sequences} of tournaments; see \cite{CG-1991}.}.

The study of ${\rm inv}(D)$ was initiated by Belkhechine \cite{belkhechine-2009} and followed by several papers
that considered its computational and extremal aspects \cite{BSH-2022,BBBP-2010,TSP-2022}.
It is not difficult to show that ${\rm inv}(n) \le n(1+o(1))$ \cite{APSSW-2022,BSH-2022,BBBP-2010} but obtaining an asymptotically matching lower bound is more involved and was only recently resolved by Alon et al.\! \cite{APSSW-2022} who proved that
${\rm inv}(n) \ge n - \sqrt{2n\log(n)}$ \footnote{Whenever the base of a logarithm is not specified, it is assumed to be $2$.} for large $n$, thus establishing that ${\rm inv}(n)=n(1+o_n(1))$.
Moreover, their proof shows that a random $n$-vertex tournament $T$ almost surely has
${\rm inv}(T)=n(1+o(1))$.

As the extreme cases ($k=2$ and $k=n$) are solved up to low order terms, and as random tournaments are extremal objects in both of these cases, one wonders what happens for other $k$.
Mainly, what is the asymptotic behavior of ${\rm inv}_k(n)$? Is it always the case that random tournaments are extremal objects?

Fix $k \ge 3$. It is readily seen that ${\rm inv}_k(n) \ge (1+o(1))n^2/2k(k-1)$.
Indeed, as the minimum feedback edge set of a tournament $T$ has size ${\rm inv_2}(T)$, and since each element in a $k$-decycling set of $T$ changes the direction of at most $\binom{k}{2}$ edges, we have that
${\rm inv}_k(T) \ge {\rm inv_2}(T)/\binom{k}{2}$ and the claim holds by recalling that
${\rm inv}_2(n)=(1+o_n(1))n^2/4$. In fact, it is not difficult to prove that random tournaments attain this bound whp \footnote{Throughout this paper, {\em whp} means ``with probability tending to one as $n$ tends to infinity''.}, as the following proposition shows.
\begin{proposition}\label{prop:random}
	For a random $n$-vertex tournament $T$, ${\rm inv}_k(T) = (1+o(1))n^2/2k(k-1)$ whp.
\end{proposition}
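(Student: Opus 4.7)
The plan is to match the already essentially stated lower bound of $(1-o(1))n^2/(2k(k-1))$ with an upper bound of the same order, both holding with high probability. For the lower bound I would invoke the inequality ${\rm inv}_k(T)\ge {\rm inv}_2(T)/\binom{k}{2}$ recalled just before the proposition: a uniformly random $n$-vertex tournament is quasi-random whp, so by the Chung--Graham characterization cited in the introduction, ${\rm inv}_2(T)=(1+o(1))n^2/4$ whp, and dividing by $\binom{k}{2}$ yields the desired lower bound.

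For the upper bound I would fix the reference ordering $\pi=(1,2,\ldots,n)$ of $V(T)$, call an edge of $T$ a \emph{back edge} if its orientation disagrees with $\pi$, and let $G$ be the (undirected) graph on $[n]$ whose edges are the back edges. Since $T$ is uniformly random, the events that different edges are back are independent with probability $1/2$, so $G$ is distributed exactly as $G(n,1/2)$; in particular whp $|E(G)|=(1+o(1))n^2/4$. The crucial observation is that after executing a sequence of $k$-inversions $X_1,\ldots,X_t$ on $T$, the back edges of the resulting tournament are exactly those edges lying in an odd number of the $X_i$. Thus making $T$ transitive with respect to $\pi$ reduces to writing $E(G)$ as a symmetric difference of few $k$-cliques.

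I would achieve this via a near-perfect $K_k$-packing of $G$. Consider the auxiliary hypergraph $\mathcal{H}$ whose vertices are the edges of $G$ and whose hyperedges are the $k$-cliques of $G$. A standard concentration argument for $G(n,1/2)$ shows that whp $\mathcal{H}$ is approximately $D$-regular with $D=\Theta(n^{k-2})$ and all pair-codegrees of order $O(n^{k-3})=o(D)$, so the R\"odl--Pippenger--Spencer nibble produces a matching in $\mathcal{H}$ of size $(1-o(1))|E(G)|/\binom{k}{2}=(1-o(1))n^2/(2k(k-1))$ that leaves only $o(n^2)$ vertices of $\mathcal{H}$ (i.e.\ edges of $G$) uncovered. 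Using each matched $K_k$ as a single $k$-inversion flips exactly its edges; the remaining $o(n^2)$ back edges are then eliminated one by one by pair inversions, which are legitimate $k$-inversions for every $k\ge 2$. Adding up gives ${\rm inv}_k(T)\le (1+o(1))n^2/(2k(k-1))$ whp, matching the lower bound. The step I expect to be most technical is the concentration argument verifying the regularity and codegree conditions for $\mathcal{H}$; this is where the full randomness of $T$ is used rather than just its edge density.
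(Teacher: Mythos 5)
Your argument is correct and follows essentially the same route as the paper: fix an ordering, observe that the set of disagreeing edges is distributed as $G(n,1/2)$, take a near-perfect edge-disjoint $K_k$-packing of it (the paper cites Frankl--R\"odl; you invoke the Pippenger--Spencer nibble, which is the same technology), invert each packed clique, and finish the $o(n^2)$ leftover edges with pair inversions, with the matching lower bound coming from ${\rm inv}_k(T)\ge {\rm inv}_2(T)/\binom{k}{2}$ together with the whp value of ${\rm inv}_2$ for a random tournament. One small misstatement: after inverting $X_1,\ldots,X_t$ the back edges are not ``exactly those lying in an odd number of the $X_i$'' but the symmetric difference of that set with the original back-edge set $E(G)$; your next sentence (reduce to writing $E(G)$ as a symmetric difference of few $k$-cliques) is the correct takeaway, and since you restrict to cliques of $G$ this symmetric difference degenerates to an edge-disjoint union, so nothing downstream breaks.
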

\noindent
We conjecture that this bound is asymptotically tight for $k=3$ and is attained only by quasi-random
(hence also by random) tournaments.
\begin{conjecture}\label{conj:1}
	${\rm inv}_3(n) = (1+o(1))\frac{n^2}{12}$. Moreover, an $n$-vertex tournament $T$ has 
	${\rm inv}_3(T) = (1+o(1))\frac{n^2}{12}$ if and only if it is quasi-random.
\end{conjecture}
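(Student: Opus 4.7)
Since Conjecture~\ref{conj:1} is still open -- the abstract records that the paper itself proves only $(1+o(1))\mathrm{inv}_3(n)/n^2 < 0.0992$ -- this is necessarily an exploratory plan rather than a completed proof. The matching lower bound $\mathrm{inv}_3(n) \ge (1+o(1))n^2/12$ is already supplied by Proposition~\ref{prop:random}, so two things remain to be proved: (a) the upper bound $\mathrm{inv}_3(T) \le (1+o(1))n^2/12$ for \emph{every} $n$-vertex tournament $T$, and (b) the characterization forcing extremal $T$ to be quasi-random.

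For (a), the plan is to pick an ordering $\sigma$ of $V(T)$ realizing $\mathrm{inv}_2(T)$, view the backward-arc set $B_\sigma$ as an undirected graph, and cover $B_\sigma$ modulo $2$ using triangles together with a small number of single-edge flips. Concretely, if $B_\sigma$ admits a triangle packing covering all but $o(n^2)$ of its edges, each triangle becomes a size-$3$ inversion whose three arcs are all backward, and the $o(n^2)$ residual backward arcs are cleaned up with size-$2$ inversions. This would give $\mathrm{inv}_3(T) \le |B_\sigma|/3 + o(n^2) \le (1+o(1))n^2/12$, using $|B_\sigma| = \mathrm{inv}_2(T) \le (1+o(1))n^2/4$ from Spencer.

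The central obstacle is that $B_\sigma$ need not be triangle-rich: for structured tournaments the optimal $B_\sigma$ can be close to bipartite and hence admit essentially no triangle packing. Overcoming this requires a dichotomy tied to Chung--Graham-style quasi-randomness. If $T$ is (approximately) quasi-random then $B_\sigma$ is pseudo-random, close to $G(n,1/2)$, and a nibble / R\"odl-type argument produces the required near-perfect triangle packing. If $T$ is $\varepsilon$-far from quasi-random, then by Chung--Graham $\mathrm{inv}_2(T) \le (1-\delta(\varepsilon))n^2/4$, and one must cash the slack $\delta(\varepsilon) n^2/4$ in $|B_\sigma|$ into the bound $\mathrm{inv}_3(T) \le (1-\delta'(\varepsilon))n^2/12$, possibly by allowing mixed size-$3$ inversions that spend a few forward arcs in order to destroy more backward arcs, with careful bookkeeping to protect the constant $1/12$. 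Engineering this trade-off tightly enough that no constant above $1/12$ leaks through for any $T$ is what I expect to be the hardest step, and is presumably what keeps the paper's current upper bound from improving past $0.0992$.

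The characterization (b) would then follow as a by-product of the dichotomy: its second branch already yields $\mathrm{inv}_3(T) \le (1-\delta'(\varepsilon))n^2/12$ for $T$ that is $\varepsilon$-far from quasi-random, so the only way to achieve $\mathrm{inv}_3(T) = (1+o(1))n^2/12$ is for $T$ to be (asymptotically) quasi-random.
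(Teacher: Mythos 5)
The statement you are asked to prove is explicitly labelled a \emph{conjecture} in the paper, and the paper proves neither the upper bound $\mathrm{inv}_3(n)\le(1+o(1))n^2/12$ nor the quasi-randomness characterization; it only proves $\mathrm{inv}_3(n)\le \frac{257}{2592}(1+o(1))n^2\approx 0.0992\,n^2$ (Theorem~\ref{t:triangle}). You correctly recognize this, so there is no paper proof to compare against and what you offer is a plan, not a proof. A few concrete remarks on where the plan, as written, cannot close.

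The rate-conversion step in your dichotomy does not work out arithmetically. When $B_\sigma$ admits a near-perfect triangle packing you pay $1/3$ of an inversion per backward arc; when it does not, the best generic fallback currently available (the $C_4$ trick of Lemma~\ref{l:c4}/Corollary~\ref{coro:triangles}) pays $1/2$ per arc, and your proposed cleanup with size-$2$ inversions pays $1$ per arc. So if $T$ is $\varepsilon$-far from quasi-random and the slack you can extract from Chung--Graham is $|B_\sigma|\le(1-\delta(\varepsilon))n^2/4$ with $B_\sigma$ possibly close to bipartite, the bound you get is at best $(1-\delta)n^2/8$, and this beats $n^2/12$ only if $\delta>1/3$. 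There is an entire unaddressed middle regime (moderate $\varepsilon$, hence small $\delta$) in which $B_\sigma$ can simultaneously have $\Theta(n^2)$ edges and very few triangles, and your plan offers no mechanism there. This is precisely the obstacle the paper is stuck on as well --- its Theorem~\ref{t:triangle} gets $\frac18-\frac{\beta}{2}$ where $\beta$ is a provable lower bound on the edge-disjoint-triangle density of $T_L(\pi)$, and the bottleneck is that $\beta$ cannot be pushed to $1/12$ because $T_L(\pi)$ need not be triangle-rich.

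Two further points. First, your claim that for quasi-random $T$ the backward-arc graph of the \emph{optimal} inv$_2$ ordering is pseudo-random and close to $G(n,1/2)$ does not follow from Chung--Graham; that theorem controls only the edge count $|B_\sigma|$, not higher-order statistics of $B_\sigma$ such as triangle counts, and an ordering chosen adversarially to minimize backward arcs could in principle make $B_\sigma$ quite structured. (The paper sidesteps this by using a \emph{random} $\pi$ rather than an optimal one, then proving triangle-density bounds for $T_L(\pi)$ via an averaging argument over small sub-tournaments with orthogonal families of permutations.) Second, the ``mixed inversions that spend forward arcs to destroy more backward arcs'' idea is exactly the $C_4$ trick already in the paper, but it only achieves rate $1/2$, not $<1/3$; so it cannot by itself rescue the non-quasi-random branch. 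Your reduction of part (b) to part (a) via the dichotomy is sound in outline, but it inherits the same gap.
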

\noindent
Note that Conjecture \ref{conj:1} actually consists of two distinct assertions and a proof of each one does not necessarily imply the other. Determining the asymptotic behavior of ${\rm inv}_3(n)$ seems challenging.
Our first main result gives an upper bound which is not far from the conjectured value.
\begin{theorem}\label{t:triangle}
	${\rm inv}_3(n) \le \frac{257}{2592}(1+o(1))n^2$\;.
\end{theorem}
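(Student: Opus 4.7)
My plan is to reformulate ${\rm inv}_3$ as a graph-theoretic covering problem and then design a strategy that handles the feedback arc set efficiently using a mix of size-$2$ and size-$3$ inversions.

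Since inversions commute in their net effect and a $k$-inversion flips exactly the edges with both endpoints in the $k$-set, a size-$\le 3$ decycling set is equivalent, for some target ordering $\pi$ of $V(T)$, to a multiset of $2$- and $3$-subsets of $V(T)$ whose edge-symmetric-difference (over $\mathbb{F}_2$) equals the feedback arc set $F_\pi$ of $T$ under $\pi$. Thus ${\rm inv}_3(T) = \min_\pi t(F_\pi)$, where $t(F)$ denotes the minimum number of $2$- and $3$-subsets of $V$ whose edge-XOR equals $F$. Two natural estimates are $t(F)\le |F|$ (using only $2$-subsets) and $t(F)\le |F|-2\,a(F)$, where $a(F)$ is the size of a maximum edge-disjoint triangle packing in $F$. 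A further improvement comes from chaining $3$-inversions that share a ``phantom'' non-$F$ edge: two such inversions can remove $4$ edges of $F$ while the auxiliary edge is covered twice and thus cancels, giving a rate of $2$ feedback edges per inversion even when $F$ is triangle-poor.

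The form of the constant $\frac{257}{2592}$ strongly suggests an optimization over a partition parameter. I would partition $V(T)$ into blocks of a carefully chosen constant size $s$, and reorder so that cross-block edges go forward in the induced ordering whenever possible. The intra-block cost is only $O(ns)$ and contributes lower-order terms. The cross-block cost splits into three kinds of $3$-inversions: those spanning three distinct blocks (flipping $3$ cross-edges simultaneously, the most efficient), those spanning two blocks (flipping $2$ cross-edges and $1$ intra-block edge), and those within a single block (useful only for intra-block edges). Bounding the fraction of wrong cross-edges that can be packed into each of these types, and optimizing jointly over $s$ and the packing fractions, should produce the stated constant.

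The main obstacle is quantifying the packing efficiency for arbitrary tournaments. In the worst case, the wrong cross-edges between two blocks may avoid any triangle spanning three blocks, forcing reliance on the less efficient $2$-block $3$-inversions. A worst-case analysis combining a structural dichotomy (either $F_\pi$ has many three-block triangles, in which case the efficient packing works, or $T$ has enough intra-block structure that much of $|F_\pi|$ can be absorbed by block-internal operations) with a careful optimization of the block size is the delicate step; this is where the specific value $\frac{257}{2592}$ should emerge from the balancing.
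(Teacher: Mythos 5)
Your high-level framework matches the paper's: you correctly identify that a $3$-inversion deleting a triangle of feedback edges saves two operations over $2$-inversions, and your ``chaining'' of two $3$-inversions sharing a phantom non-feedback edge is precisely the paper's Lemma~3.1/Corollary~3.2 mechanism (reversing a $4$-cycle in $T_L(\pi)$ with two $3$-inversions, clearing $2$ feedback edges per inversion). So the accounting $t(F)\approx a(F)+\bigl(|F|-3a(F)\bigr)/2 = |F|/2-a(F)/2$ is exactly what the paper uses.

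The genuine gap is the step you flag yourself: lower-bounding the edge-disjoint triangle packing number of $T_L(\pi)$ for an \emph{arbitrary} tournament. Your proposal to partition $V(T)$ into constant-size blocks, reorder so cross-block edges ``go forward whenever possible,'' and then balance three-block versus two-block $3$-inversions does not supply such a bound, and I do not see how the ``structural dichotomy'' would be made quantitative. For a worst-case $T$, the feedback set has size $\approx n^2/4$ under \emph{any} ordering, and nothing about a block partition guarantees that a constant fraction of the wrong cross-edges can be grouped into edge-disjoint triangles; the adversary can choose $T$ so that the feedback graph restricted to any fixed vertex triple behaves adversarially. The paper's way around this is entirely different and considerably more elaborate: it chooses a \emph{random} permutation $\pi$, and for each $q$-subset $X$ (with $q=9$) looks at whether $\pi|_X$ lies in a designated \emph{orthogonal family} of permutations $P(T[X])$. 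Orthogonality (obtained from mutually orthogonal Latin squares) forces near-independence of the ``successful'' events across overlapping $q$-sets, which lets one control degrees and co-degrees of a $\binom{q}{2}$-uniform auxiliary hypergraph; Pippenger--Spencer then yields a near-proper edge coloring, one color class gives a large \emph{fractional} triangle packing of $T_L(\pi)$, and Haxell--R\"odl converts this to an integer packing. The quantitative input $\zeta=67/18$ is obtained by an explicit computer search over all $191{,}536$ tournaments on $9$ vertices, maximizing the average of $\nu^*_3(Q_L(\sigma))$ over an orthogonal family; this is the source of $\frac{257}{2592}=\frac18-\frac{67/18}{2\cdot 9\cdot 8}$, not a block-size optimization. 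Your reverse-engineering guess about the origin of the constant is therefore off, and more importantly, the analytic core of the argument --- turning ``average local triangle density'' into a global edge-disjoint packing via orthogonal permutation families and hypergraph matching --- is absent from the proposal.
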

\noindent
Since ${\rm inv}_3(n) \ge (1+o(1))\frac{n^2}{12}$, we have that $(1+o(1)){\rm inv}_3(n)/n^2 \in [\frac{1}{12}, 0.0992)$.

\vspace{5pt}
As for larger fixed $k$, we are able to show that starting from some given $k_0$, the lower bound
$(1+o(1))n^2/2k(k-1)$ is, perhaps surprisingly, {\em not} tight.
This is a consequence of the proof of our second main result, which gives upper and lower bounds 
for ${\rm inv}_k(n)$.
\begin{theorem}\label{t:inv-k}
	$(1+o(1)){\rm inv}_k(n)/n^2 \in [\frac{1}{2k(k-1)}+\delta_k, \frac{1}{2\lfloor k^2/2 \rfloor}-\epsilon_k]$
	where $\epsilon_k > 0$ for all $k \ge 3$ and $\delta_k > 0$ for all $k \ge k_0$.
\end{theorem}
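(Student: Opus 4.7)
Both bounds are best phrased in the $\mathbb{F}_2$-linear-algebraic picture: a $k$-decycling set is a collection $X_1,\ldots,X_m$ of $\le k$-subsets of $V(T)$ whose indicator clique-graphs $\mathbf{1}_{K_{X_i}}$ sum in $\mathbb{F}_2^{\binom{n}{2}}$ to $B(T,\pi)$ for some linear order $\pi$, where $B(T,\pi)$ is the backward edge set. Thus ${\rm inv}_k(T)$ is the minimum XOR-cover number of $B(T,\pi)$ by $\le k$-cliques, minimized over $\pi$.

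For the upper bound, I would begin with Spencer's ordering on $V(T)$, giving $|B(T,\pi)|\le \binom{n}{2}/2 - cn^{3/2}$. I then partition $V$ into $m\approx 2n/k$ consecutive blocks $V_1,\ldots,V_m$ of sizes close to $\lceil k/2\rceil$. Within-block backward edges (only $O(n)$ in total) are handled with $O(n)$ auxiliary small inversions. For each pair $(V_i,V_j)$ with $i<j$, I flip the bipartite backward edges between the two blocks by inversions supported on $V_i\cup V_j$ (a set of size $\le k$): a single such inversion flips up to $|V_i|\cdot|V_j|\le\lfloor k^2/4\rfloor$ bipartite edges, and the unwanted within-block side-effects are amortized to zero using auxiliary block inversions at total cost $o(n^2)$. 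Summing over pairs and substituting Spencer's bound gives
\[ {\rm inv}_k(T) \le (1+o(1))\frac{|B(T,\pi)|}{\lfloor k^2/4\rfloor}+o(n^2) \le \frac{n^2}{2\lfloor k^2/2\rfloor}-\Omega(n^{3/2}), \]
which yields a constant $\eps_k>0$. The main technical step is showing that an arbitrary bipartite feedback pattern between two blocks of size $\le\lceil k/2\rceil$ can be XOR-decomposed into bicliques at near-optimal efficiency $\lfloor k^2/4\rfloor$ per inversion; I expect this to follow from a greedy/randomized biclique-cover argument.

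For the lower bound with $k\ge k_0$, I need a tournament with ${\rm inv}_k(T)\ge(1+o(1))n^2/(2k(k-1))+\delta_k n^2$. Since the edge-counting bound ${\rm inv}_k(T)\ge{\rm inv}_2(T)/\binom{k}{2}$ is tight for random $T$ by Proposition~\ref{prop:random}, any improvement must come from a structural obstruction to efficient $k$-clique XOR-cover. My plan is probabilistic: sample $T$ from a biased or conditioned distribution so that, for every $\pi$, the feedback graph $B(T,\pi)$ is ``spread out'' enough to prevent dense $k$-subsets from carrying $\binom{k}{2}$-many backward edges and hence to prevent an efficient XOR-cover; then a pigeonhole argument combined with a union bound over orderings should force any $k$-decycling sequence to exceed the edge-counting bound by a multiplicative factor $1+\delta_k$. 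The main obstacle is precisely this lower bound: pushing strictly above the random-tournament threshold $1/(2k(k-1))$ requires a genuine rigidity-type statement saying that ``generic'' feedback graphs resist efficient XOR-cover by $k$-cliques, and I expect the underlying concentration inequalities to become effective only once $k\ge k_0$, with the threshold governed by the crossover between the polynomial supply of $k$-cliques on $n$ vertices and the much larger effective dimension of the feedback-arc space.
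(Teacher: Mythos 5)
Your proposal has a fundamental flaw in the upper bound and only a sketch of the lower bound, so there is a genuine gap in both directions.

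\textbf{Upper bound.} Your key technical claim---that an arbitrary bipartite feedback pattern between two blocks of size $\le\lceil k/2\rceil$ can be XOR-decomposed into bicliques at efficiency $\lfloor k^2/4\rfloor$ edges per inversion---is false. An inversion supported on $S\subseteq V_i\cup V_j$ toggles, on the cross-edges, exactly the biclique $(S\cap V_i)\times(S\cap V_j)$, a rank-one matrix over $\mathbb{F}_2$. So the minimum number of such inversions needed to XOR exactly the backward bipartite pattern $H$ is the $\mathbb{F}_2$-rank of the bipartite adjacency matrix of $H$. For a typical bipartite pattern between blocks of size $\sim k/2$ this rank is $\Omega(k)$, so the efficiency is $\Theta(k)$ edges per inversion, not $\Theta(k^2)$. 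Summed over the $\Theta(n^2/k^2)$ block pairs this gives $\Theta(n^2/k)$ inversions, off by a factor of $k$ from the target $n^2/k^2$. Moreover, even setting this aside, your claimed bound ${\rm inv}_k(T)\le n^2/(2\lfloor k^2/2\rfloor)-\Omega(n^{3/2})$ does \emph{not} give a constant $\epsilon_k>0$; the $\Omega(n^{3/2})$ savings is $o(n^2)$, i.e.\ $\epsilon_k=o(1)$. The paper avoids both problems by never covering arbitrary bipartite patterns: it first extracts $\Omega_k(n^2)$ pairwise edge-disjoint copies of $K_k$ from the backward graph (via Tur\'an + Ramsey, Lemma~\ref{l:tr}), each removable with a \emph{single} inversion at efficiency $\binom{k}{2}$ edges, which is the strictly-better-than-$\lfloor k^2/2\rfloor/2$ step that produces the constant $\epsilon_k$; it then greedily reverses \emph{complete} bicliques $K_{k,k}$ (or $K_{k+1,k-1}$) found via K\H{o}v\'ari--S\'os--Tur\'an, each at cost four inversions (Lemma~\ref{l:kk}), which completely sidesteps the $\mathbb{F}_2$-rank obstruction since a complete biclique has rank one.

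\textbf{Lower bound.} You correctly identify that beating the edge-counting bound requires a rigidity/structural obstruction, but you do not propose a concrete construction---``a biased or conditioned distribution'' plus ``a pigeonhole argument combined with a union bound over orderings'' is a program, not a proof, and you yourself flag this as ``the main obstacle.'' The paper's construction is specific: a balanced blowup of a pseudorandom tournament $Q$ on $q=k^2$ vertices satisfying Lemma~\ref{l:q} (large feedback number \emph{and} every not-too-small induced sub-tournament has large feedback number). The point is that in the blowup, every $k$-set $X$ in any decycling sequence wastes $\Omega(k^2/\log^4 k)$ of its $\binom{k}{2}$ edge-reversals---either on inner edges (if $X$ is concentrated in few parts) or on outer edges that were already forward (if $X$ is spread out, using the hereditary pseudorandomness of $Q$ via Lemma~\ref{l:blowup})---which gives the multiplicative gain $1+\delta_k$ after dividing the $\approx n^2/4$ backward outer edges by the reduced per-inversion budget.
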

Note that whenever $\delta_k > 0$, Proposition \ref{prop:random} implies that random tournament
are {\em not} the extremal objects. Thus, by Theorem \ref{t:inv-k}, for $k \ge k_0$, tournaments that attain ${\rm inv}_k(n)$ are far from random. On the other hand, notice that Conjecture \ref{conj:1} asserts that
$\delta_3 = 0$ (and recall that $\delta_2 = 0$).
\begin{problem}\label{prob:2}
		Find the smallest $k_0$ for which $\delta_{k_0} > 0$.
\end{problem}

The rest of this paper proceeds as follows: In Section \ref{sec:prelim} we introduce some definitions and collect some known tools that are needed for the proofs. We consider the case $k=3$ in Section \ref{t:triangle} where we prove Theorem \ref{t:triangle}. In Section \ref{sec:cliques} we consider larger $k$ and prove
Theorem \ref{t:inv-k}.

\section{Preliminaries}\label{sec:prelim}

This section presents several definitions and tools required for the proofs of our main results.

\subsection{Hypergraph coloring}\label{subsec:hyper}

Recall that a $k$-uniform hypergraph is a collection of $k$-sets (the edges) of some $n$-set
(the vertices). The {\em degree} $d(x)$ of a vertex $x$ in a hypergraph is the number of
edges containing $x$ and the {\em co-degree} of a pair of distinct vertices $x,y$ is the number of edges containing both.
A {\em matching} in a hypergraph is a set of pairwise 
disjoint edges. The {\em chromatic index} of a hypergraph $H$,
denoted $\chi'(H)$, is the smallest integer $q$ such that the set of edges of $H$ can be partitioned
into $q$ matchings.
The following result of Pippenger and Spencer \cite{PS-1989}
gives sufficient conditions on $H$ which guarantee that $\chi'(H)$ is close to the maximum degree of $H$.
\begin{lemma}[\cite{PS-1989}]\label{l:ps}
	For an integer $k \geq 3$ and a real $\gamma > 0$ 
	there exists a real $\beta=\beta(k,\gamma)$ so that the following holds:
	If a $k$-uniform hypergraph $H$ has the following properties for some $t$:\\
	(i) $(1-\beta)t < d(x) < (1+\beta)t$ holds for all vertices,\\
	(ii) $d(x,y) < \beta t$ for all distinct $x$ and $y$,\\
	then $\chi'(H) \leq (1+\gamma)t$. \qed
\end{lemma}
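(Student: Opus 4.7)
The plan is to prove this via the Rödl nibble (semi-random method), in the spirit of Pippenger and Spencer's original argument. The goal is to construct at most $(1+\gamma)t$ matchings $M_1, M_2, \ldots$ whose union is $E(H)$, and to color the edges of each $M_i$ with a single color $i$.

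The workhorse is a single-round nibble lemma: any hypergraph $H'$ satisfying the hypotheses with current target degree $t'$ and small regularity parameter $\beta'$ contains a matching $M$ covering all but an $o(1)$ fraction of its vertices. To find such $M$, include each edge independently in a candidate set $E^*$ with probability $p/t'$ for a constant $p=p(k)$ optimizing coverage, and let $M$ be the edges in $E^*$ that have no conflicts with any other edge of $E^*$. A first-moment calculation using (ii) shows that the expected number of matching edges at each vertex is approximately $p\,e^{-(k-1)p}$, and the variance is small enough that Talagrand's inequality gives concentration uniformly over all vertices. One then repeats this sub-nibble $O(\log(1/\eta))$ times within a single round, boosting the covered fraction from $p\,e^{-(k-1)p}$ up to $1-\eta$; during the boosting the co-degree condition must also be tracked via concentration.

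Iterating the single-round nibble lemma produces matchings $M_1,\ldots,M_r$ with $H_i = H \setminus (M_1 \cup \cdots \cup M_i)$, and after $r = t - O(\gamma t)$ rounds the maximum degree of $H_r$ drops to $O(\gamma t)$. The residual edges can then be colored greedily using at most $k \cdot O(\gamma t)$ additional colors (the line graph has max degree at most $k$ times the max degree of $H_r$); calibrating constants gives at most $(1+\gamma)t$ colors in total.

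The main obstacle is iterative preservation of conditions (i)-(ii). Each round degrades the regularity parameter, and condition (ii) is essential for every first-moment and variance estimate. One must choose the initial $\beta = \beta(k,\gamma)$ small enough that even after $\Theta(t)$ rounds the co-degree remains a small fraction of the current target degree; this is demanding because in the final rounds the target degree has shrunk to $\Theta(\gamma t)$, which is precisely where the $\gamma$-dependence of $\beta$ enters. A careful union bound over all vertices, pairs, and rounds --- together with Talagrand-type concentration in each round --- is the technical heart of the argument.
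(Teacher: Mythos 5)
The paper does not prove this lemma; it is quoted verbatim as a known result from Pippenger and Spencer \cite{PS-1989}, so there is no in-paper proof to compare against. Your sketch is a faithful high-level outline of the actual Pippenger--Spencer argument: a R\"odl-nibble producing near-perfect matchings, iterated while tracking degree and codegree regularity, with a greedy finish on the low-degree residual. One small anachronism: you invoke Talagrand's inequality, which postdates \cite{PS-1989}; their original proof uses martingale (Azuma-type) concentration, though Talagrand-type bounds work equally well in modern presentations. You correctly identify the technical heart of the proof---controlling the degradation of conditions (i) and (ii) across $\Theta(t)$ nibble rounds, especially in late rounds where the current target degree has shrunk to $\Theta(\gamma t)$---but you only flag this difficulty rather than execute the estimates. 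As a roadmap the proposal is accurate and matches the approach of the cited source; as a proof it leaves the hardest step (quantitative iterated concentration of degrees and codegrees, together with the resulting dependence of $\beta$ on $k$ and $\gamma$) entirely open.
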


\subsection{Digraphs, permutations, and random graphs}\label{subsec:tourperm}

The edge-set of every digraph $D$ is the disjoint union of the edge sets of two directed acyclic subgraphs.
Indeed, consider some permutation $\pi$ of $V(D)$.
Let $D_L(\pi)$ be the spanning subgraph of $D$ where $(i,j) \in E(D_L(\pi))$ if and only if $(i,j) \in E(D)$ and $\pi(i) < \pi(j)$.
Let $D_R(\pi)$ be the spanning subgraph of $D$ where $(i,j)\in E(D_R(\pi))$ if and only if $(i,j) \in E(D)$ and $\pi(i) > \pi(j)$.
Since $E(D_R(\pi)) \cup E(D_L(\pi)) = E(D)$, we can cover the edges of $D$ using just two
directed acyclic subgraphs of $D$.
When referring to $D_L(\pi)$ and $D_R(\pi)$ it it convenient to view them as {\em undirected} simple graphs,
but recalling that they correspond to edges of $T$ going from left to right in the case of $D_L(\pi)$ or
from right to left in the case of $D_R(\pi)$.

An $n$-vertex  {\em random tournament} is the probability space $T(n)$ of tournaments on vertex set $[n]$, obtained by orienting the edges of $K_n$ at random (i.e., each direction is decided with a fair coin flip) and all $\binom{n}{2}$ choices are independent. By definition, for each given permutation $\pi$ of $[n]$,
if $T \sim T(n)$ then each of $T_L(\pi)$ and $T_R(\pi)$ is distributed as the binomial random graph $G(n,\frac{1}{2})$.
\begin{proof}[Proof of Proposition \ref{prop:random}]
	Fix a permutation $\pi$ of $[n]$. Let $T \sim T(n)$ and notice that since  $T_L(\pi) \sim G(n,\frac{1}{2})$,
	it has $(1+o(1))n^2/4$ edges, whp. By the result of Frankl and R\"odl \cite{FR-1985},
	$G(n,\frac{1}{2})$ can be almost entirely packed with pairwise edge-disjoint copies of $K_k$, whp.
	Equivalently, this means that whp we can find a collection $C$ of $((1+o(1))n^2/4)/\binom{k}{2}$ sets of vertices, each of size either $k$ or $2$, such that (i) any pair of sets in $C$ intersect in at most one vertex; (ii) each edge of $T_L(\pi)$ is contained in precisely one set of $C$; (iii) each edge of $T_R(\pi)$ is not contained in any set of $C$. Therefore, $C$ forms a decycling set of $T$.
\end{proof}

\subsection{Fractional packing}\label{subsec:fractional}

For an undirected graph $G$, let $\binom {G}{k}$ denote the set of all $K_k$ copies of $G$ (namely, subgraphs of $G$ that are isomorphic to $K_k$).
A function $\phi$ from $\binom {G}{k}$ to $[0,1]$ is a {\em fractional $K_k$-packing} of $G$ if for each edge of $G$, the sum of the values of $\phi$ taken over all $K_k$-copies that contain that edge is at most $1$.
The {\em value} of $\phi$ is
$$
|\phi|=\sum_{H \in \binom{G}{k}} \phi(H)
$$
and $\nu^*_k(G)$ is the maximum of $|\phi|$ taken over all fractional $K_k$-packings of $G$.
A {\em $K_k$-packing} of $G$ is a fractional $K_k$-packing whose image is $\{0,1\}$.
Equivalently, it is a set of pairwise edge-disjoint copies of $K_k$.
Letting $\nu_k(G)$ denote the maximum value of a $K_k$-packing of $G$,
we have $\nu^*_k(G) \geq \nu_k(G)$.
An important result of Haxell and R\"odl \cite{HR-2001} (see also \cite{yuster-2005}) shows that the converse inequality holds up to an additive error term which is negligible for dense graphs.
\begin{lemma}[\cite{HR-2001}]\label{l:hr}
	For every $\eps > 0$ and for every positive integer $k \ge 3$ there exists $N=N(k,\eps)$ such that
	for any graph $G$ with $n > N$ vertices, $\nu^*_k(G)-\nu_k(G) \le \eps n^2$. \qed
\end{lemma}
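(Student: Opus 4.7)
The plan is to apply the Pippenger--Spencer lemma (Lemma \ref{l:ps}) to an auxiliary $\binom{k}{2}$-uniform hypergraph whose vertex set is $E(G)$ and whose matchings are precisely the edge-disjoint families of $K_k$-copies of $G$. Our goal is to exhibit a matching in this hypergraph of size at least $\nu^*_k(G) - \eps n^2$.

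Starting from an optimal fractional $K_k$-packing $\phi$ of $G$, I would first argue that, after a negligible rounding, $\phi$ may be taken rational with common denominator $M$, where $M$ is a large multiple of $n^{k-2}$. Build a multi-hypergraph $\mathcal H$ on vertex set $E(G)$ by including each $K_k$-copy $K$ with multiplicity $M\phi(K)$. The fractional-packing constraint gives that every vertex of $\mathcal H$ has degree at most $M$, while any two vertices $e,e' \in E(G)$ lie together in at most $O(n^{k-3})$ copies of $K_k$, so the co-degree is $o(M)$ by the choice of $M$.

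Next I would regularize $\mathcal H$ by attaching ``dummy'' hyperedges. Let $V'$ be a disjoint reservoir of new vertices; for each $e \in E(G)$ with deficiency $M - d_{\mathcal H}(e)$, add $M - d_{\mathcal H}(e)$ pairwise vertex-disjoint hyperedges, each containing $e$ together with $\binom{k}{2}-1$ fresh vertices of $V'$. The resulting hypergraph $\mathcal H^{\ast}$ is $\binom{k}{2}$-uniform, every real vertex has degree exactly $M$, every dummy vertex has degree $1$, and co-degrees among real vertices are unchanged. Lemma \ref{l:ps} then yields a partition of $E(\mathcal H^{\ast})$ into at most $(1+\gamma)M$ matchings. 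Since the total number of real hyperedges is $M|\phi|$, some matching contains at least $M|\phi|/((1+\gamma)M) = |\phi|/(1+\gamma)$ real hyperedges, and these form an edge-disjoint $K_k$-packing of $G$. Using the crude bound $\nu^*_k(G) \le n^2/2$, choosing $\gamma$ sufficiently small in terms of $\eps$ gives $\nu^*_k(G) - \nu_k(G) \le \eps n^2$ for all large $n$.

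The hardest step is the regularization, together with reconciling the multi-hypergraph $\mathcal H$ with the simple-hypergraph formulation of Lemma \ref{l:ps}: one must verify that the placement of dummies leaves co-degrees among real vertices untouched, and resolve parallel hyperedges in $\mathcal H$ (by a further tiny perturbation of $\phi$, since distinct $K_k$-copies correspond to different edge sets but may receive equal multiplicities). An alternative route that avoids dummies altogether is the R\"odl semi-random (``nibble'') method, in which one iteratively samples a small random subfamily of $K_k$-copies weighted by $\phi$, removes their edges, and tracks that the residual fractional optimum remains close to $\nu^*_k(G)$; this route also yields the claim but requires concentration inequalities at each stage rather than a single invocation of Pippenger--Spencer.
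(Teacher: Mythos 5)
The paper does not prove this lemma; it simply cites \cite{HR-2001} (see also \cite{yuster-2005}), and those proofs go through Szemer\'edi's regularity lemma. Your main route via Pippenger--Spencer has a genuine gap in the co-degree estimate. In your multi-hypergraph $\mathcal{H}$, the degree of a vertex $e \in E(G)$ is $\sum_{K \ni e} M\phi(K) \le M$, which is fine, but the co-degree of two vertices $e, e'$ is $\sum_{K \ni e,\, e'} M\phi(K)$, not merely the number of distinct $K_k$-copies containing both. You bound the latter by $O(n^{k-3})$, but each such copy carries multiplicity $M\phi(K)$, and the sum $\sum_{K \ni e,\, e'}\phi(K)$ need not be $o(1)$; whenever some $K_k$-copy through both $e$ and $e'$ has $\phi$-weight bounded away from zero (which is the typical situation when $\nu^*_k(G) > \nu_k(G)$), the co-degree is $\Theta(M)$. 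For a concrete obstruction, let $G$ be a disjoint union of copies of $K_{k+1}$ with the canonical optimal fractional packing assigning weight $1/(k-1)$ to each sub-$K_k$: two edges $e, e'$ in a common $K_k$ then have co-degree $M(k-2)/(k-1)$ in $\mathcal{H}$. Thus hypothesis (ii) of Lemma \ref{l:ps} fails, and the dummy regularization cannot repair it, since --- as you yourself note --- it leaves co-degrees among real vertices unchanged. The error is conflating the number of $K_k$-copies through a pair with the total weighted multiplicity through that pair.

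Your nibble alternative is closer in spirit to the actual argument, but as sketched it skips the decisive preprocessing. A nibble requires approximate degree regularity and small co-degrees in the auxiliary hypergraph, and the example above shows exactly this can fail for a general $G$. Both \cite{HR-2001} and \cite{yuster-2005} first apply the regularity lemma, transfer the fractional packing to the reduced (cluster) graph, and only then run a Frankl--R\"odl-type argument inside regular pieces, where pseudo-randomness delivers the degree and co-degree control needed; without that structural reduction, the step where you ``track that the residual fractional optimum remains close'' is precisely where the argument stalls.
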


\section{Triangle inversions}\label{sec:triangle}

In this section we prove Theorem \ref{t:triangle}. We begin with a lemma that will be useful to ``finish off''
decycling a tournament which already has a relatively small feedback edge set, using triangle inversions.

\begin{lemma}\label{l:c4}
	Let $T^*$ be a tournament and suppose that $T^*_L(\pi)$ has a (not necessarily induced) $4$-cycle on vertices $a,b,c,d$ where $\pi(a) < \pi(b) < \pi(c) < \pi(d)$. Then, there are two sets of vertices $X,Y$ of three vertices each, such that inverting $\{X,Y\}$ reverses the direction of the edges of the $4$-cycle without	affecting the direction of any other edge of $T^*$.
\end{lemma}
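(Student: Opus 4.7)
The plan is to exhibit the two sets $X$ and $Y$ explicitly, guided by a simple parity observation. For any two $3$-sets $X,Y\subseteq V(T^*)$, inverting $\{X,Y\}$ reverses precisely those edges of $T^*$ contained in exactly one of the cliques on $X$ and on $Y$, since edges contained in both are reversed twice and edges contained in neither are untouched. In order to reverse exactly four edges, we are forced into the case $|X\cap Y|=2$: if we write $X=\{u,v,w\}$ and $Y=\{u,v,z\}$ with $w\ne z$, then the edge $uv$ is reversed twice, the edge $wz$ is not reversed, and the four reversed edges are $uw,vw,uz,vz$. These four edges are exactly the edges of the complete bipartite graph $K_{2,2}$ with bipartition $\{u,v\}\cup\{w,z\}$, and no edge of $T^*$ outside $\{u,v,w,z\}$ is touched.

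To finish, I would use the fact that a (not necessarily induced) $4$-cycle on the vertex set $\{a,b,c,d\}$ is itself a $K_{2,2}$ and therefore determines a unique bipartition $\{p,q\}\cup\{r,s\}$ of $\{a,b,c,d\}$. Setting $X:=\{p,q,r\}$ and $Y:=\{p,q,s\}$, the observation of the previous paragraph gives that inverting $\{X,Y\}$ reverses exactly the four edges $pr,qr,ps,qs$ — i.e.\ exactly the edges of the $4$-cycle — and leaves the two remaining edges $pq$ and $rs$ (and every edge of $T^*$ with at least one endpoint outside $\{a,b,c,d\}$) untouched. There is essentially no obstacle to overcome: the whole argument rests on the identification $C_4=K_{2,2}$ and on the elementary enumeration of what a pair of overlapping $3$-inversions can reverse; the ordering hypothesis $\pi(a)<\pi(b)<\pi(c)<\pi(d)$ plays no role in the construction itself and is only present to record that the four edges being flipped originally lie in $T^*_L(\pi)$ (and after inversion lie in $T^*_R(\pi)$), which is what will be exploited in the subsequent use of the lemma.
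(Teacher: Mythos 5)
Your proof is correct and produces exactly the same pairs $(X,Y)$ that the paper lists in its three-case analysis: where the paper enumerates the three possible $4$-cycles on an ordered quadruple $(a,b,c,d)$ and hands a pair of $3$-sets to each, you observe that the $4$-cycle is a $K_{2,2}$ with a canonical bipartition $\{p,q\}\cup\{r,s\}$ and read off $X=\{p,q,r\}$, $Y=\{p,q,s\}$ uniformly. This is the same underlying construction presented without a case split, and your remark that the ordering hypothesis $\pi(a)<\pi(b)<\pi(c)<\pi(d)$ plays no role in the construction itself is an accurate (and useful) clarification.
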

\begin{proof}
	Since $T^*_L(\pi)$ corresponds to an acyclic subgraph of $T^*$, there are exactly three possible
	orientations of the edges of the four-cycle in $T^*$. These are: (i) $\{(a,b), (a,c), (b,d), (c,d)\}$;
	(ii) $\{(a,b),(b,c),(c,d),(a,d)\}$; (iii) $\{(a,c),(b,c),(a,d),(b,d)\}$.
	In the first case, let $X=\{a,b,c\}$ and $Y=\{b,c,d\}$.
	In  the second case, let $X=\{a,b,c\}$ and $Y=\{a,c,d\}$.
	In  the third case, let $X=\{a,b,c\}$ and $Y=\{a,b,d\}$.
	In each of these cases we see that inverting $\{X,Y\}$ reverses the direction of the edges of the
	corresponding $4$-cycle without affecting the direction of any other edge of $T^*$.
\end{proof}

\begin{corollary}\label{coro:triangles}
	Let $T^*$ be an $n$-vertex tournament and suppose that $|E(T^*_L(\pi))| \le \alpha n^2$.
	Then, ${\rm inv}_3(T^*) \le \alpha n^2/2+o(n^2)$. 
\end{corollary}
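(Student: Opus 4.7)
The plan is to iteratively destroy $4$-cycles in the undirected graph $G := T^*_L(\pi)$ by invoking Lemma~\ref{l:c4}, and then mop up the residual $C_4$-free remnant with size-$2$ inversions, which are permitted in a $3$-decycling set (parts need only have \emph{at most} three vertices).

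First I would run the following greedy loop: while $G$ contains a $4$-cycle, pick one, label its vertices $a,b,c,d$ so that $\pi(a)<\pi(b)<\pi(c)<\pi(d)$, and invoke Lemma~\ref{l:c4} to append the two triples $X,Y$ to the decycling set. After this inversion the four cycle edges flip from $L$ to $R$, no other edge of $T^*$ is disturbed, and $|E(G)|$ drops by exactly $4$. Since $G$ starts with at most $\alpha n^2$ edges, the loop runs at most $\alpha n^2/4$ times and contributes at most $\alpha n^2/2$ triples.

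When the loop terminates, $G$ is $C_4$-free, and any $C_4$-free graph on $n$ vertices has $O(n^{3/2})$ edges by the K\H{o}v\'ari--S\'os--Tur\'an bound. For each surviving edge $e=\{u,v\}$ of $G$ I append the pair $\{u,v\}$ to the decycling set; this single-edge inversion flips $e$ from $L$ to $R$ without affecting anything else. After these $O(n^{3/2})=o(n^2)$ extra operations $T^*_L(\pi)$ is empty, so $T^*$ has become the transitive tournament corresponding to the reverse of $\pi$, and the total count is $\alpha n^2/2+o(n^2)$, as required.

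The main conceptual point is that Lemma~\ref{l:c4} supplies exactly the amortized cost of $1/2$ inversion per flipped edge, matching the target constant, and the Tur\'an-type bound on $C_4$-free graphs absorbs the clean-up into the $o(n^2)$ error term. There is no substantial obstacle: a greedy argument suffices precisely because each step strictly reduces $|E(G)|$, and the stuck state is automatically sparse. The only subtlety worth flagging is that a $3$-decycling set may contain parts of size~$2$, which is what legitimises the single-edge mop-up step.
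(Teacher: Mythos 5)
Your proof is correct and is essentially identical to the paper's: repeatedly destroy $4$-cycles via Lemma~\ref{l:c4} (two size-$3$ inversions per cycle, amortized $1/2$ inversion per flipped edge), then clean up the $C_4$-free remnant of $O(n^{3/2})$ edges with single-edge inversions, which the K\H{o}v\'ari--S\'os--Tur\'an bound absorbs into the $o(n^2)$ term.
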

\begin{proof}
	By Lemma \ref{l:c4} we can repeatedly invert pairs of three-sets of vertices each, 
	until we obtain a tournament $T^{**}$ for which $T^{**}_L(\pi)$ has no four-cycle.
	The number of inversions performed starting at $T^*$ and arriving at $T^{**}$ is
	therefore precisely $(|E(T^*_L(\pi))|-|E(T^{**}_L(\pi))|)/2$. By the Kov\'ari-S\'os-Tur\'an Theorem \cite{KST-1954}, we have
	$|E(T^{**}_L(\pi))| \le (n^{3/2}+n)/2$, hence the corollary.
\end{proof}

A high level approach to proving Theorem \ref{t:triangle} follows. Suppose that $T$ is a given $n$-vertex tournament. We prove that a randomly chosen permutation $\pi$ has the following two properties whp: (i) $T_L(\pi)$ has roughly $n^2/4$ edges; (ii) $T_L(\pi)$ has many (say, roughly $\beta n^2$) pairwise edge-disjoint triangles. Once we show these properties, we can reverse the claimed set of pairwise edge-disjoint triangles to obtain a tournament
$T^*$ for which $T^*_L(\pi)$ has roughly $\alpha n^2$ edges where $\alpha=\frac{1}{4}-3\beta$.
We then apply Corollary \ref{coro:triangles} to reverse the edges of $T^*_L(\pi)$ and obtain a decycling
set of $T$ of order roughly $\beta n^2 + \alpha n^2/2 = (\frac{1}{8}-\frac{\beta}{2})n^2$.
Proving that a random permutation satisfies (i) is a standard argument, but proving (ii) for a reasonable value of $\beta$ requires careful analysis, detailed later. The main difficulty stems from the fact
that we do not know much about the structure of $T_L(\pi)$.

For the rest of this section we fix a tournament $T$ on vertex set $[n]$. We start by proving (i) above.
\begin{lemma}\label{l:randpi-i}
	Let $\pi$ be a randomly chosen permutation of $[n]$. Whp,
	$|E(T_L(\pi))|=(1+o(1))n^2/4$.
\end{lemma}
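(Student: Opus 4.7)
The plan is first to compute the expectation exactly and then to establish concentration around the mean; the whole argument is meant to be routine and short.

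For the expectation, I would work edge by edge. For each edge $e$ of $T$, say with endpoints $i,j$ oriented as $i \to j$, write $X_e$ for the indicator that $\pi(i) < \pi(j)$, so that $|E(T_L(\pi))| = \sum_{e \in E(T)} X_e$. Under a uniformly random permutation, the relative order of any two distinguished elements is uniform on its two possibilities, so $\mathbb{E}[X_e] = \tfrac{1}{2}$. By linearity, $\mathbb{E}[|E(T_L(\pi))|] = \tfrac{1}{2}\binom{n}{2} = (1+o(1))n^2/4$.

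For concentration I would use a bounded-differences argument. Generate $\pi$ from $n$ i.i.d.\ uniform $[0,1]$ variables $U_1, \dots, U_n$ by taking ranks, and let $f(U_1,\dots,U_n) = |E(T_L(\pi))|$. Changing one coordinate $U_i$ only changes the rank of vertex $i$ relative to the others while leaving the relative order of the remaining vertices intact, so only edges of $T$ incident with $i$ can enter or leave $E(T_L(\pi))$. Thus each coordinate has Lipschitz constant at most $n-1$, and McDiarmid's inequality gives
\[
\Pr\bigl[\,\bigl||E(T_L(\pi))| - \mathbb{E}|E(T_L(\pi))|\bigr| \ge t\,\bigr] \;\le\; 2\exp\!\left(-\frac{2t^2}{n(n-1)^2}\right).
\]
Taking $t = n^{7/4}$ makes the right-hand side $o(1)$ while $t = o(n^2)$, which yields the claim.

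An equally direct alternative is a variance bound: indicators $X_e, X_f$ for disjoint pairs of edges are independent (a swap of the two endpoints of $e$ fixes the event for $f$), while each of the $O(n^3)$ pairs of edges sharing a vertex contributes $O(1)$ covariance, so $\mathrm{Var}(|E(T_L(\pi))|) = O(n^3)$ and Chebyshev gives the same $o(n^2)$ concentration. There is no real obstacle here; the one thing worth checking carefully is the Lipschitz (or covariance) bookkeeping, since the naive estimate that flipping one $U_i$ could change every edge's status is avoided precisely because relative orders among the remaining vertices are untouched.
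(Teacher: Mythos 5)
Your proposal is correct. The expectation computation is identical to the paper's. Your primary concentration argument, however, takes a genuinely different route: you encode the random permutation via i.i.d.\ uniforms and apply McDiarmid's bounded-differences inequality, observing (correctly) that resampling one coordinate $U_i$ can only affect edges of $T$ incident with $i$, so each coordinate is $(n-1)$-Lipschitz; this yields exponential concentration, which is stronger than needed but of course sufficient. The paper instead uses the second route you mention at the end: a second-moment bound, noting that $X_e$ and $X_f$ are independent when $e$ and $f$ are vertex-disjoint, so that only the $O(n^3)$ intersecting pairs contribute to the variance, and then applies Chebyshev. That is precisely the paper's argument. Both approaches are valid and standard; the paper's variance bound is the minimal tool for the job, consistent with its use of Chebyshev-type concentration elsewhere (e.g., in the proof of Lemma~\ref{l:hyp-properties}), while your McDiarmid route delivers a much sharper tail bound at essentially no extra cost. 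One small remark: in your Chebyshev alternative, the parenthetical ``a swap of the two endpoints of $e$ fixes the event for $f$'' is a terse but correct justification of independence for disjoint pairs, since the transposition $\pi(i)\leftrightarrow\pi(j)$ toggles $X_e$ while preserving $X_f$, giving the measure-preserving bijection that equalizes all four joint probabilities.
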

\begin{proof}
	Let $X(i,j)$ denote the indicator random variable for the event that $ij \in E(T_L(\pi))$.
	We observe that $X(i,j) \sim {\rm  Bernoulli}(\frac{1}{2})$ and that
	$|E(T_L(\pi))|$ is the sum of $X(i,j)$ taken over all edges $(i,j) \in E(T)$, so
	its expected value is $\binom{n}{2}/2=(1+o(1))n^2/4$. To show that $|E(T_L(\pi))|$ is concentrated, we upper-bound its variance. Notice that if $\{i,j\} \cap \{i',j'\} = \emptyset$, then $X(i,j)$ and $X(i',j')$ are independent
	and notice that there are fewer than $n^3$ ordered pairs $(\{i,j\},\{i',j'\})$ for which
	$\{i,j\} \cap \{i',j'\} \neq \emptyset$. Hence, ${\rm Var}[|E(T_L(\pi))|] \le {\mathbb E}[|E(T_L(\pi))|] + n^3= o({\mathbb E}[|E(T_L(\pi))|]^2)$. The lemma now follows from Chebyshev's inequality.
\end{proof}

We now turn to our second task, i.e., showing that $T_L(\pi)$ has many pairwise edge-disjoint triangles, whp.
Let us first see a way to obtain some nontrivial bound for this quantity.
In a recent paper, Gruslys and Letzter \cite{GL-2020}, improving an earlier result of Keevash and Sudakov \cite{KS-2004}, and thereby confirming a conjecture of Erd\H{o}s, proved that in any two-coloring of the
edges of $K_n$, there are at least $(1-o(1))n^2/12$ pairwise edge-disjoint monochromatic triangles.
Observing that coloring the edges of $T_L(\pi)$ red and coloring the edges of $T_R(\pi) = T_L(\pi^{reverese})$ blue corresponds to a two-coloring of the edges of $K_n$, we have by Lemma \ref{l:randpi-i} and by
the result in \cite{GL-2020} that there exists a permutation $\pi$ such that $|E(T_L(\pi))|=(1+o(1))n^2/4$ and $T_L(\pi)$ has at least $(1+o(1))n^2/24$ pairwise edge-disjoint triangles. Notice that in this argument, we are only using a weaker form of the result in \cite{GL-2020}: that in any two-coloring of the
edges of $K_n$, there are many pairwise edge-disjoint monochromatic triangles all of the same color.
In fact, it was conjectured by Jacobson (see \cite{EFGJL-2001}) that there are always at least $(1+o(1))n^2/20$
such triangles (there are examples showing that, if true, the constant $\frac{1}{20}$ is optimal), but this is still open, although it was proved in \cite{GL-2020} that the correct constant
for this question must be strictly larger than $\frac{1}{24}$.
Notice, however, that even applying the latter form of the question (i.e., Jacobson's conjectured value) may be stronger than what we need. Indeed, we will show that this is provably the case;
we will show that whp a random permutation gives that $T_L(\pi)$ has $(1+o(1))\beta n^2$ edge-disjoint triangles,
where the obtained constant $\beta$ is significantly larger than (the proven) $\frac{1}{24}$ and (the conjectured) $\frac{1}{20}$ constants of the aforementioned monochromatic triangles questions.

We require some further notation and definitions. Let $q \ge 4$ be an integer parameter to be set later
(it will be small, but not too small).
Let $\T_q$ be the set of all tournaments on $q$ vertices. For a tournament $Q \in \T_q$ and a permutation $\sigma$ of
$V(Q)$, recall that $\nu^*_3(Q_L(\sigma))$ is the maximum value of a fractional triangle packing of $Q_L(\sigma)$.
\begin{definition}
	Let $P$ be a family of permutations of $V(Q)$. Let ${\rm avg}_P(Q)$ be the average of
	$\nu^*_3(Q_L(\sigma))$ where $\sigma$ is taken over all permutations in $P$.
	Let ${\rm avg}(Q)$ be the average taken over all possible $q!$ permutations.
\end{definition}
It is possible that for some families, ${\rm avg}_P(Q)$ is larger than the overall average
${\rm avg}(Q)$ while for other families, it is smaller. It will be beneficial to assign to each tournament $Q \in \T_q$, a family $P$ for which ${\rm avg}_P(Q)$ is as large as possible, under some restrictions whose
necessity will be apparent later (for instance, to make computations feasible, we would like $|P|$ to be small).
To formalize these restrictions, we need the following definition.
\begin{definition}
	Let $P$ be a family of permutations of a set $X$ with $|X|=q$.
	We call $P$ {\em orthogonal} if for any ordered pair $u,v$ of elements of $X$ and any two positions
	$1 \le i < j \le q$, there is exactly one $\sigma \in P$ such that $\sigma(u)=i$ and $\sigma(v)=j$.
\end{definition}
We note that the definition implies that the size of an orthogonal family is $q(q-1)$ (namely, much smaller than $q!$).
For a prime power $q$, a construction of an orthogonal family of permutations can be obtained from certain constructions of $q-1$ mutually orthogonal Latin squares (aka MOLS). For example, for $q=9$, such a family is shown in Table \ref{table:orthogonal} and is obtained from the (columns of the) $8$ pairwise orthogonal Latin squares of order $9$ given in \cite{CD-2007}, Page 164.
An obvious but useful observation is that if $P$ is an orthogonal family of permutations of $X$, and 
$\sigma^*$ is any permutation of $[q]$, the family $\{\sigma^*\sigma\,:\, \sigma \in P\}$ of permutations of $X$ is also orthogonal. In particular, this means that if there is an orthogonal family of permutations of $V(Q)$ for a tournament $Q$, then there is some such orthogonal family $P$ for which ${\rm avg}_P(Q) \ge {\rm avg}(Q)$.

\begin{table}[ht]
	\centering
	\begin{tabular}{cccccc}
		$012345678$
		& $120453786$
		& $201534867$
		& $345678012$
		& $453786120$
		& $534867201$
		
		\\$678012345$
		& $786120453$
		& $867201534$
		& $021687354$
		& $102768435$
		& $210876543$
		
		\\$354021687$
		& $435102768$
		& $543210876$
		& $687354021$
		& $768435102$
		& $876543210$
		
		\\$036471825$
		& $147582603$
		& $258360714$
		& $360714258$
		& $471825036$
		& $582603147$
		
		\\$603147582$
		& $714258360$
		& $825036471$
		& $048723561$
		& $156804372$
		& $237615480$
		
		\\$372156804$
		& $480237615$
		& $561048723$
		& $615480237$
		& $723561048$
		& $804372156$
		
		\\$057138246$
		& $138246057$
		& $246057138$
		& $381462570$
		& $462570381$
		& $570381462$
		
		\\$624705813$
		& $705813624$
		& $813624705$
		& $063852417$
		& $174630528$
		& $285741306$
		
		\\$306285741$
		& $417063852$
		& $528174630$
		& $630528174$
		& $741306285$
		& $852417063$
		
		\\$075264183$
		& $183075264$
		& $264183075$
		& $318507426$
		& $426318507$
		& $507426318$
		
		\\$642831750$
		& $750642831$
		& $831750642$
		& $084516732$
		& $165327840$
		& $273408651$
		
		\\$327840165$
		& $408651273$
		& $516732084$
		& $651273408$
		& $732084516$
		& $840165327$
	\end{tabular}
	\caption{An orthogonal family of permutations of $X=\{0,\ldots,8\}$.}
	\label{table:orthogonal} 
\end{table}  

Hereafter we assume that $q$ is such that a family of orthogonal permutations of sets of size $q$ exists (e.g., if $q=9$ this holds by Table \ref{table:orthogonal}). Let $\zeta \ge 0$ be a real value to be computed later.
Suppose that for each $Q \in \T_q$ we can find an orthogonal family of permutations $P=P(Q)$ (note: distinct $Q$ may be assigned distinct $P$) such that ${\rm avg}_{P(Q)}(Q) \ge \zeta$. We will show how to lower-bound the number of pairwise edge-disjoint triangles in $T_L(\pi)$ in terms of $\zeta$ and $n$.
To this end, we need to define a certain hypergraph, which depends on $T$, on $\pi$, and on the assignments
$P(Q)$ for each $Q \in \T_q$. We next formalize the definition of this hypergraph (Definition \ref{def:hyper}).
\begin{definition}
For each $q$-subset $X$ of vertices of $T$, consider the sub-tournament $T[X] \in \T_q$ it induces.
We say that $X$ is successful if $\sigma = \pi|_X \in P(T[X])$. Otherwise, it is unsuccessful.
\end{definition}
\begin{observation}\label{obs:1}
Since $\pi$ is a random permutation, so is its restriction $\sigma=\pi|_X$, hence the probability that $X$ is successful is precisely $P(T[X])/q! = 1/(q-2)!$.
\end{observation}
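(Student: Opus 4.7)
The plan is to reduce the claim to two elementary facts: that the relative order induced on a fixed subset by a uniform random permutation is itself uniform, and that every orthogonal family has size exactly $q(q-1)$.

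First I would make precise what $\sigma = \pi|_X$ means. Since $\pi$ is a permutation of $[n]$ and $X \subseteq [n]$ has size $q$, the values $\{\pi(x) : x \in X\}$ induce a bijection $\sigma$ from $X$ to the positions $\{1,\ldots,q\}$ according to the rank order of the $\pi(x)$. The key observation is that if $\pi$ is drawn uniformly from the $n!$ permutations of $[n]$, then by symmetry $\sigma$ is uniform over the $q!$ permutations of $X$: for any target permutation $\sigma_0$ of $X$, the number of $\pi$ yielding $\pi|_X = \sigma_0$ equals $n!/q!$, independent of $\sigma_0$. This step requires nothing beyond the defining property of the uniform distribution on the symmetric group.

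Next, I would invoke the defining property of an orthogonal family. By Definition, for every ordered pair $(u,v)$ of distinct elements of $X$ and every pair of positions $1 \le i < j \le q$, there is exactly one $\sigma \in P(T[X])$ with $\sigma(u)=i$ and $\sigma(v)=j$. Fixing $(u,v)$ and summing over the $\binom{q}{2}$ position-pairs counts each element of $P(T[X])$ exactly once, giving $|P(T[X])| = q(q-1)$.

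Combining the two, $\Pr[X \text{ is successful}] = \Pr[\pi|_X \in P(T[X])] = |P(T[X])|/q! = q(q-1)/q! = 1/(q-2)!$, as claimed (so the expression ``$P(T[X])/q!$'' in the statement is to be read as $|P(T[X])|/q!$). There is no real obstacle here; the observation is a bookkeeping step that isolates the correct uniform probability before it gets fed into the hypergraph-degree computations in the next definition.
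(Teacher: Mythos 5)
Your proof is correct and matches the (implicit) reasoning of the paper: the paper states the observation without proof, relying on the standard fact that $\pi|_X$ is uniform and on the remark following the definition that an orthogonal family has size $q(q-1)$. You also rightly point out that ``$P(T[X])/q!$'' is an abuse of notation for $|P(T[X])|/q!$.
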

\begin{definition}\label{def:hyper}
	Let $H(T,\pi)$ be the hypergraph whose vertex set is $E(T)$ and for each successful
	$q$-subset $X$ of vertices of $T$, there is an edge of $H$ consisting of all
	the edges of $T[X]$.
\end{definition}
\noindent
Notice that $H(T,\pi)$ is $\binom{q}{2}$-uniform. The following lemma establishes some important properties of $H(T,\pi)$ that hold whp. Its proof relies crucially on the orthogonality property of $P(T[X])$.
\begin{lemma}\label{l:hyp-properties}
	Let $\pi$ be a randomly chosen permutation of $[n]$. Whp,
	$H(T,\pi)$ has an induced subhypergraph $H'$ with at least $\binom{n}{2} - 3n^{1.9}$ vertices such that:\\
	(i) The degree of each vertex of $H'$ is $(1+o(1))\frac{\binom{n-2}{q-2}}{(q-2)!}$\,;\\
	(ii) The co-degree of each pair of vertices of $H'$ is at most $n^{q-3}$.
\end{lemma}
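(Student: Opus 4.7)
The plan is to establish (ii) almost for free and focus all real work on (i). For (ii), note that for any two distinct edges $e, f$ of $T$, the number of $q$-subsets of $V(T)$ containing both is at most $\binom{n-3}{q-3}$ (if $e,f$ share a vertex) or $\binom{n-4}{q-4}$ (if vertex-disjoint), either way at most $n^{q-3}$. Since the co-degree in $H(T,\pi)$ counts only the successful such subsets, (ii) is inherited deterministically by every induced subhypergraph $H'$, so the task reduces entirely to (i).

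Fix $e = (u,v) \in E(T)$, let $D_e$ denote its degree in $H(T,\pi)$, and set $t := \binom{n-2}{q-2}/(q-2)!$, so that $\mathbb{E}[D_e] = t$ by Observation~\ref{obs:1}. The core step is to prove the variance bound $\mathrm{Var}(D_e) = O(n^{2q-5})$; since $t = \Theta(n^{q-2})$, this is $O(t^2/n)$, and Chebyshev then gives $\Pr[\,|D_e - t| > t/\log n\,] = O(\log^2 n/n)$. Writing $\mathrm{Var}(D_e) = \sum_{X,X'}\mathrm{Cov}(\mathbf{1}_X,\mathbf{1}_{X'})$ over $q$-subsets $X, X' \supseteq \{u,v\}$, I would group terms by $k := |X \cap X'|$. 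For $k \ge 3$ the $O(n^{2q-2-k})$ pairs contribute $O(n^{2q-5})$ via the trivial bound $|\mathrm{Cov}| \le 1/(q-2)!$. The crucial case is $k = 2$, with $\Theta(n^{2q-4})$ pairs, where I would show the covariance vanishes identically by invoking orthogonality. To do so, generate $\pi$ by sorting i.i.d.\ Uniform$[0,1]$ variables $r_1, \ldots, r_n$, and condition on $r_u, r_v$. Since $X \setminus \{u,v\}$ and $X' \setminus \{u,v\}$ are disjoint, $\pi|_X$ and $\pi|_{X'}$ depend on disjoint and independent sets of $r$-values, so they are conditionally independent. For each realization of the ranks $(j_u, j_v)$ of $u, v$ inside $\pi|_X$, the remaining $q - 2$ positions of $X$ form a uniform random permutation over the $(q-2)!$ possibilities; by orthogonality of $P(T[X])$, exactly one of these completions lies in $P(T[X])$, so $\Pr[\,X\text{ successful}\mid r_u, r_v\,] = 1/(q-2)!$ almost surely. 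The same holds for $X'$, and multiplying gives $\Pr[\text{both}] = 1/((q-2)!)^2 = \Pr[X]\Pr[X']$, so $\mathrm{Cov}(\mathbf{1}_X,\mathbf{1}_{X'}) = 0$.

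To finish, summing Chebyshev over the $\binom{n}{2}$ choices of $e$ and applying Markov shows that whp the ``bad'' set $R_0 := \{e : |D_e - t| > t/\log n\}$ has $|R_0| = O(n\log^3 n)$. Removing $R_0$ could depress the degrees of surviving vertices; however, summing the total drop over all survivors gives $\sum_{e \notin R_0}\sum_{f \in R_0}\mathrm{codeg}(e,f) \le \sum_{f \in R_0}D_f \binom{q}{2} = O(|R_0|t)$, so at most $O(|R_0|\log n)$ further vertices experience a drop exceeding $t/\log n$. A constant number of rounds of such iterative cleanup produces $H'$ with at most $3n^{1.9}$ removed vertices and all surviving degrees in $(1+o(1))t$. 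The main obstacle throughout is the $k = 2$ covariance computation: without orthogonality of $P(T[X])$, the conditional probability $\Pr[\,X\text{ successful}\mid r_u, r_v\,]$ would fail to be constant, and the variance would be of order $t^2$, killing the concentration argument.
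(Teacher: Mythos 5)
Your core probabilistic argument is correct and closely matches the paper: the deterministic treatment of (ii), the decomposition of $\mathrm{Var}(D_e)$ by intersection size, the use of orthogonality to kill the covariances for $|X\cap X'|=2$ (your conditional-independence route via i.i.d.\ uniforms and the positions of $u,v$ is a clean way to make this rigorous), and the Chebyshev-plus-Markov step identifying a small exceptional set $R_0$. This is essentially the paper's argument up to the choice of concentration window.

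The gap is in the cleanup. You assert that ``a constant number of rounds of such iterative cleanup produces $H'$ with at most $3n^{1.9}$ removed vertices and all surviving degrees in $(1+o(1))t$,'' but your counting only shows that one round of removal creates at most $O(|R_0|\log n)$ newly large-drop vertices: the iterates \emph{grow} (after $c$ rounds the removed set has size $O(n\log^{3+c}n)$), and nothing in the argument forces the process to stabilize in constantly many rounds. A vertex surviving $c$ rounds has been certified to have drop at most $t/\log n$ only relative to the $(c-1)$-st removed set, not relative to the eventual removed set; you would need a separate argument that the fixed point is reached with a removed set of size $O(n^{1.9})$. The paper sidesteps this with a one-shot two-tier removal: besides removing the exceptional set $F$, it calls a vertex $w$ of $T$ \emph{dangerous} if $w$ is an endpoint of at least $n^{0.6}$ elements of $F$ (so at most $2n^{0.9}$ dangerous vertices), and removes from $H$ every edge of $T$ incident to a dangerous vertex. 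For a surviving $e=(u,v)$, since $u,v$ are not dangerous only $O(n^{0.6})$ elements of $F$ share a vertex with $e$ (each costing at most $n^{q-3}$), the $f\in F$ disjoint from $e$ cost only $n^{q-4}$ each, and removed $q$-sets through dangerous vertices cost at most $2n^{0.9}\cdot n^{q-3}$; all three sources total $O(n^{q-2.1})=o(t)$, with no iteration. Your proposal needs an analogous structural observation — that large-drop vertices of $H$ must lie on a few heavy vertices of $T$, which can be preemptively excised — to be complete.
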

\begin{proof}
	We start with the second assertion, which is not probabilistic. Consider two vertices of $H(T,\pi)$,
	i.e., two edges of $T$, say $(u,v)$ and $(w,z)$. The total number of $q$-subsets of vertices of $T$
	that contain both of these edges is $\binom{n-4}{q-4}$ if $\{u,v\} \cap \{w,z\} = \emptyset$ and is
	$\binom{n-3}{q-3}$ if $\{u,v\} \cap \{w,z\} \neq \emptyset$. In any case, we see that the number of $q$-sets
	of vertices containing both $(u,v)$ and $(w,z)$ is less than  $n^{q-3}$ and in particular, the co-degree of $(u,v)$ and $(w,z)$ in $H(T,\pi)$, which only counts successful $q$-sets, is less than $n^{q-3}$.
	
	For the first assertion, fix a vertex of $H(T,\pi)$, i.e., an edge $e=(u,v)$ of $T$.
	Let $d(e)$ be the random variable corresponding to the degree of $e$ in $H(T,\pi)$.
	Let ${\mathcal X}$ be the $q$-sets of vertices of $T$ that contain both $u$ and $v$
	and observe that $|{\mathcal X}|=\binom{n-2}{q-2}$. For $X \in {\mathcal X}$, consider the
	indicator random variable $d(X)$ for the event that $X$ is successful. We have that $d(e) = \sum_{X \in {\mathcal X}} d(X)$. By Observation \ref{obs:1}, we have that
	$d(X)  \sim {\rm  Bernoulli}(\frac{1}{(q-2)!})$ so we obtain that
	$$
	{\mathbb E}[d(e)] = \frac{\binom{n-2}{q-2}}{(q-2)!}\;.
	$$
	We show that $d(e)$ is concentrated by considering its variance. To this end, fix two elements
	of ${\mathcal X}$, say $X$ and $Y$, and consider ${\rm Cov}(d(X),d(Y))$. Notice that as each of $X$ and $Y$ contain both $u$ and $v$, we have that $|X \cap Y| \ge 2$. Now, if $|X \cap Y| \ge 3$ we shall use the trivial bound ${\rm Cov}(d(X),d(Y)) \le 1$ (recall that $d(X)$ and $d(Y)$ are indicators). So, suppose that $|X \cap Y| = 2$, i.e., they to not have common elements other
	than $u$ and $v$. Now, suppose that we are given $d(X)$, i.e., we are told whether $X$ is successful or not.
	Moreover, suppose that we are revealed all the values of $\pi$ on $V(T) \setminus (Y \setminus \{u,v\})$ 	(notice that this informations reveals all the values of $\pi$ on $X$, so in particular, it reveals $d(X)$). So, we know $\pi(u)$ and $\pi(v)$, we know the positions in $\pi$ occupied by $Y \setminus \{u,v\}$ but we do not know the internal ordering of the elements of $Y \setminus \{u,v\}$ within these positions.
	As the family $P(T[Y])$ is orthogonal, there is precisely {\em one} possible ordering of $Y \setminus \{u,v\}$
	for which $\pi|_Y \in P(T[Y])$, i.e., for which $Y$ is successful. Thus, the probability that
	$Y$ is successful, given $d(X)$, is precisely $1/(q-2)!$, i.e., the same a priori probability.
	In particular, ${\rm Cov}(d(X),d(Y))=0$.
	We now have
	\begin{align*}
	{\rm Var}[d(e)] & \le {\mathbb E}[d(e)]+ 2\sum_{t=2}^{q-1} \sum_{\substack{X,Y \in {\mathcal X}\\ |X \cap Y|=t}} {\rm Cov}(d(X),d(Y))\\
	& = {\mathbb E}[d(e)]+ 2\sum_{t=3}^{q-1} \sum_{\substack{X,Y \in {\mathcal X}\\ |X \cap Y|=t}} {\rm Cov}(d(X),d(Y))\\
	& \le {\mathbb E}[d(e)]+ 2\sum_{t=3}^{q-1} n^{2q-2-t}\\
	& \le {\mathbb E}[d(e)]+ 2qn^{2q-5}\\
	& \le 3qn^{2q-5}
	\end{align*}
where in the last inequality we have used that $q \ge 4$. We may now apply Chebyshev's inequality and obtain that
	$$
	\Pr\left[\,|d(e) - {\mathbb E}[d(e)] \,| \ge n^{q-2.1} \right] \le \frac{3qn^{2q-5}}{n^{2q-4.2}}= \Theta(n^{-0.8})\;.
	$$
	As $T$ has fewer than $n^2$ edges, we obtain from the last inequality and from Markov's inequality that whp, all but $O(n^{1.2}) < n^{1.5}$ vertices $e$ of $H(T,\pi)$ have
	$|d(e) - {\mathbb E}[d(e)]| \le n^{q-2.1}$.
	Consider then the set $F$ of at most $n^{1.5}$ vertices $e$ of $H(T,\pi)$ which violate the last inequality.
	For a vertex $v \in V(T)$ we say that $v$ is {\em dangerous} if $v$ is an endpoint of at least $n^{0.6}$
	elements of $F$. So, there are fewer than $2n^{0.9}$ dangerous vertices. Remove from $H(T,\pi)$ all elements
	of $F$ and also remove all vertices $e$ of $H(T,\pi)$ such that $e$ is an endpoint of a dangerous vertex of $T$.
	Let $H'$ be the induced subhypergraph of $H(T,\pi)$ obtained after the removal.
	As we remove at most $|F|+2n^{1.9}$ vertices from $H(T,\pi)$, we have that $H'$ contains at least
	$\binom{n}{2}-3n^{1.9}$ vertices. Clearly, the co-degree of any two vertices in $H'$ is
	not larger than it is in $H$. By how much might a degree of a vertex $e=(u,v)$ in $H'$ decrease?
	It might belong to a successful $q$-subset which contains a dangerous vertex of $T$, but there are only at most
	$2n^{0.9}n^{q-3} < 2n^{q-2.1}$ such $q$-subsets. As $u$ and $v$ are non-dangerous, there may additionally be at most $2n^{0.6}$ vertices $(x,y)$ of $H(T,\pi)$ where $(x,y) \in F$ and $|\{x,y\} \cap \{u,v\}|=1$.
	But then these may cause a further reduction of at most $2n^{0.6}n^{q-3} < n^{q-2.1}$ in  the degree of $e$.
	Additionally, it may be that $(u,v)$ belongs to at $q$-subsets which contain an element $(x,y)$ of $F$ such that $\{x,y\} \cap \{u,v\}=\emptyset$. But then these may cause a further reduction of at most $n^{1.5}n^{q-4} < n^{q-2.1}$ in the degree of $e$. It now follows that all the vertices of $H'$ 
	have degree 
	${\mathbb E}[d(e)] \pm 5n^{q-2.1}$, which is ${\mathbb E}[d(e)](1+o(1))$. 
\end{proof}
We use Lemma \ref{l:hyp-properties} to lower-bound the number of pairwise edge-disjoint triangles in $T_L(\pi)$ in terms of $\zeta$ and $n$.
\begin{lemma}\label{l:zeta}
	Suppose that $\zeta \ge 0$ is such that for each $Q \in \T_q$ it holds that ${\rm avg}_{P(Q)}(Q) \ge \zeta$.
	Then whp it holds that $T_L(\pi)$ has at least $(1-o(1))\frac{\zeta n^2}{q(q-1)}$ pairwise edge-disjoint triangles.
\end{lemma}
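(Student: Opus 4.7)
The plan is to build a fractional triangle packing of $T_L(\pi)$ whose value is close to $\zeta n^2/(q(q-1))$ and then invoke Lemma \ref{l:hr} to convert it into an integer packing with only $o(n^2)$ loss. The fractional packing will be glued together from optimal fractional packings on the sub-tournaments $T[X]_L(\pi|_X)$ for $X$ ranging over a carefully chosen matching of the hypergraph $H'$ supplied by Lemma \ref{l:hyp-properties}.

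First I would introduce the random quantity
\[
W = \sum_{\substack{X \subseteq [n]:\,|X|=q\\ X \text{ successful}}} \nu^*_3\bigl(T[X]_L(\pi|_X)\bigr).
\]
By Observation \ref{obs:1}, each $X$ is successful with probability $1/(q-2)!$ and, conditional on success, $\pi|_X$ is uniform on $P(T[X])$; hence each $q$-subset contributes in expectation at least $\zeta/(q-2)!$, yielding $\mathbb{E}[W] \ge \binom{n}{q}\zeta/(q-2)!$. Since contributions indexed by disjoint $X,Y$ are independent and each contribution is bounded by a constant depending only on $q$, a Chebyshev calculation --- using that the number of ordered intersecting pairs is $O(n^{2q-1})$, negligible against $\mathbb{E}[W]^2 = \Theta(n^{2q})$ --- gives $W \ge (1-o(1))\mathbb{E}[W]$ whp.

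Next I would pass to $H'$. Deleting the at most $3n^{1.9}$ vertices of $H(T,\pi)$ destroys $O(n^{q-0.1})$ hyperedges and therefore discards $o(n^q)$ from $W$, so the restricted sum $W' = \sum_{X \in E(H')} \nu^*_3(T[X]_L(\pi|_X))$ still satisfies $W' \ge (1-o(1))\mathbb{E}[W]$. Writing $t = \binom{n-2}{q-2}/(q-2)!$, part (i) of Lemma \ref{l:hyp-properties} gives every vertex of $H'$ degree $(1+o(1))t$ and part (ii) gives every co-degree at most $n^{q-3} = o(t)$, so Lemma \ref{l:ps}, applied with any prescribed $\gamma > 0$, partitions $E(H')$ into at most $(1+\gamma)t$ matchings. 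Averaging over these color classes yields one of them, $M$, with $\sum_{X \in M}\nu^*_3(T[X]_L(\pi|_X)) \ge W'/((1+\gamma)t)$, which simplifies via $\binom{n}{q}/\binom{n-2}{q-2} = \binom{n}{2}/\binom{q}{2}$ (and letting $\gamma \to 0$) to $(1-o(1))\zeta n^2/(q(q-1))$.

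Because the $q$-subsets in $M$ have pairwise disjoint edge sets in $T$, the optimal fractional triangle packings on each $T[X]_L(\pi|_X)$ combine into a single valid fractional triangle packing of $T_L(\pi)$ whose value equals the above sum; Lemma \ref{l:hr}, applied with a sufficiently small $\eps$, then extracts $(1-o(1))\zeta n^2/(q(q-1))$ pairwise edge-disjoint triangles in $T_L(\pi)$. I expect the main obstacle to be coordinating the various $o(1)$ losses --- from passing to $H'$, from the $(1+\gamma)$ slack in Pippenger--Spencer, and from the additive $\eps n^2$ slack in Haxell--R\"odl --- so that they collectively leave a clean $1-o(1)$ factor; the concentration step for $W$ itself reduces to a single Chebyshev application, since contributions of disjoint $q$-sets are independent.
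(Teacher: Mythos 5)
Your proof is correct and follows essentially the same route as the paper's: define a sum over successful $q$-subsets, lower-bound its expectation via Observation \ref{obs:1} and the hypothesis ${\rm avg}_{P(Q)}(Q)\ge\zeta$, concentrate by Chebyshev using the independence of disjoint $q$-sets, pass to $H'$ and apply Pippenger--Spencer to extract a heavy matching $M$, glue the local fractional packings into a global one of $T_L(\pi)$, and finish with Haxell--R\"odl. One small remark worth noting: you define $W$ directly as $\sum_{X\text{ successful}}\nu^*_3(T[X]_L(\pi|_X))$, whereas the paper's random variable $R$ is $\sum_{X\text{ successful}}{\rm avg}_{P(T[X])}(T[X])$; the two have the same expectation and the same Chebyshev bound, but it is your version $W$ (not $R$) whose restriction to a matching $M$ literally sums the fractional packing values on the edge-disjoint pieces and hence yields a valid fractional packing of $T_L(\pi)$ --- so your formulation is actually the cleaner one and closes the loop that the paper's phrasing leaves slightly implicit.
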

\begin{proof}
	Let $\pi$ be a randomly chosen permutation of $[n]$. Define the following random variable for each $q$-subset $X$ of vertices of $T$:
	$$
	X^* = \begin{cases}
	 {\rm avg}_{P(T[X])}(T[X]) &  \text{if } X \text{ is successful},\\
	0 & \text{otherwise}\,.
\end{cases}
	$$
	Let $R$ denote the sum of $X^*$ taken over all $q$-subsets of vertices $T$.
	Notice that by its definition, ${\rm avg}_{P(T[X])}(T[X])$ is the expected value of
	$\nu^*_3(T[X]_L(\pi|_X))$ taken over all permutations of the orthogonal family $P(T[X])$, so $R$ is the expected sum of $\nu^*_3(T[X]_L(\pi|_X))$ taken over all successful $q$-subsets of vertices. By Observation \ref{obs:1} and by our assumption that ${\rm avg}_{P(T[X])}(T[X]) \ge \zeta$ we have that
	$$
	{\mathbb E}[R] = \sum_{X} {\mathbb E}[X^*] = \sum_{X} \frac{{\rm avg}_{P(T[X])}(T[X])}{(q-2)!} \ge \binom{n}{q}\frac{\zeta}{(q-2)!}\;.
	$$
	We next show that $R$ is concentrated by considering its variance. To this end, notice that $R$ is a sum of
	$\binom{n}{q}=\Theta(n^q)$ nonnegative random variables, each bounded from above by the constant $q(q-1)/6$. Indeed, a fractional triangle packing of a graph on $q$ vertices cannot be more than $\frac{1}{3}$ of the number of its edges, so ${\rm avg}_{P(T[X])}(T[X]) \le q(q-1)/6$.  To show that ${\rm Var}[R] =o({\mathbb E}[R]^2)$ it is therefore enough to show that the number of pairs $X,Y$ such that ${\rm Cov}(X^*,Y^*) \neq 0$
	is $o(n^{2q})$. Indeed, observe that the permutations $\pi|_X$ and $\pi|_Y$ are independent whenever $X$
	and $Y$ are disjoint sets of vertices. As each $X$ is not disjoint with at most $q\binom{n-1}{q-1}$ possible $Y$, we have that the number of pairs $X,Y$ such that ${\rm Cov}(X^*,Y^*) \neq 0$ is only
	$O(n^{2q-1})$, as required. As we have shown that ${\rm Var}[R] =o({\mathbb E}[R]^2)$, it follows from Chebyshev's inequality that $R-{\mathbb E}[R]$ is $o({\mathbb E}[R])$ whp and in particular,
	$R \ge (1-o(1))\binom{n}{q}\frac{\zeta}{(q-2)!}$ whp.
	
	By Lemma \ref{l:hyp-properties} and from the previous paragraph we have that whp, $\pi$ is such that: (a) $R \ge (1-o(1))\binom{n}{q}\frac{\zeta}{(q-2)!}$; (b)
	$H(T,\pi)$ has an induced subhypergraph $H'$ on at least $\binom{n}{2}-3n^{1.9}$ vertices satisfying both items of Lemma \ref{l:hyp-properties}.
	For the remainder of the proof we assume that $\pi$ is such that (a) and (b) hold.
	
	Notice that $H'$ satisfies the conditions of Lemma \ref{l:ps} with $k=\binom{q}{2}$, $t=\binom{n-2}{q-2}/(q-2)!$ and $\gamma = o(1)$. Indeed, $H'$ has $\binom{n}{2}(1-o(1))$ vertices
	and by Lemma \ref{l:hyp-properties}, the degree of every vertex of $H'$ is $(1+o(1))\binom{n-2}{q-2}/(q-2)!$
	while the co-degree of every pair of vertices of $H'$ is much smaller, only at most $n^{q-3}$.
	So by Lemma \ref{l:ps},  $\chi'(H') \le (1+o(1))\binom{n-2}{q-2}/(q-2)!$. By the definition of $H(T,\pi)$
	and $H'$, this means that there is a subset $W \subseteq E(T)$ of at least $\binom{n}{2}-3n^{1.9}$ edges, such that
	the family of all successful $q$-sets whose edges are entirely contained in $W$ can be partitioned into $\chi'(H')$
	parts, say $M_1,\ldots,M_{\chi'(H')}$ where for each $M_i$, all $q$-subsets contained in it are pairwise edge-disjoint.
	
	Let $R_i$ denote the contribution to $R$ of the $q$-subsets in $M_i$ and let $R_0$ denote the contribution to $R$ of the $q$-subsets that contain an edge in $E(T) \setminus W$. Hence,
	$R=\sum_{i=0}^{\chi'(H')}R_i$. We first observe that $R_0$ is negligible. As there are only at most $3n^{1.9}$ edges not in $W$, there are at most $3n^{1.9}n^{q-2}$ \, $q$-subsets that contribute to $R_0$, and recall that they each contribute at most $q(q-1)/6$, so in total $R_0=o(n^{q})$. Since
	$R \ge (1-o(1))\binom{n}{q}\frac{\zeta}{(q-2)!}$ we may assume wlog that
	$$
	R_1 \ge \frac{1}{\chi'(H')}(1-o(1))\binom{n}{q}\frac{\zeta}{(q-2)!} \ge (1-o(1))\binom{n}{q}\frac{\zeta}{\binom{n-2}{q-2}} = (1-o(1))\frac{\zeta n^2}{q(q-1)}\;.
	$$
	Recalling that the $q$-sets in $M_1$ are pairwise edge disjoint, we have that $T_L(\pi)$ has fractional triangle packing of size $\nu_3^*(T_L(\pi)) \ge (1-o(1))\frac{\zeta n^2}{q(q-1)}$.
	By Lemma \ref{l:hr} we therefore obtain that $\nu_3(T_L(\pi)) \ge (1-o(1))\frac{\zeta n^2}{q(q-1)}$ as well.	
\end{proof}
\begin{corollary}\label{coro:final}
	Let $q \ge 4$ and suppose that for each $Q \in \T_q$ we can find an orthogonal family of permutations $P=P(Q)$ such that ${\rm avg}_{P(Q)}(Q) \ge \zeta$. Then,
	${\rm inv}_3(n) \le (1+o(1)n^2\left(\frac{1}{8}-\frac{\zeta}{2q(q-1)}\right)$.
\end{corollary}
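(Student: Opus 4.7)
The plan is to execute the high-level sketch given just before Lemma \ref{l:randpi-i}. Fix an $n$-vertex tournament $T$ and draw a uniformly random permutation $\pi$ of $[n]$. By Lemma \ref{l:randpi-i}, whp $|E(T_L(\pi))|=(1+o(1))n^2/4$, and by Lemma \ref{l:zeta} (applied with the orthogonal families $P(Q)$ supplied by the hypothesis), whp $T_L(\pi)$ contains a collection $\mathcal{C}$ of at least $(1-o(1))\zeta n^2/(q(q-1))$ pairwise edge-disjoint triangles. A union bound yields a single $\pi$ realizing both events simultaneously; I would fix such a $\pi$ for the rest of the argument.

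Next, I would form a partial $3$-decycling of $T$ by inverting every triangle in $\mathcal{C}$. Each such triangle is a $3$-set, so these are legal moves; and because the triangles are pairwise edge-disjoint and entirely contained in $T_L(\pi)$, each inversion flips exactly three edges of $T_L(\pi)$ and no edge is flipped twice. Letting $T^*$ denote the resulting tournament, we obtain
\[ |E(T^*_L(\pi))| \le |E(T_L(\pi))| - 3|\mathcal{C}| \le \left(\tfrac{1}{4} - \tfrac{3\zeta}{q(q-1)} + o(1)\right)n^2, \]
so $T^*$ satisfies the hypothesis of Corollary \ref{coro:triangles} with $\alpha = \tfrac{1}{4} - \tfrac{3\zeta}{q(q-1)}$. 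That corollary then supplies an additional $3$-decycling of $T^*$ of size at most $\alpha n^2/2 + o(n^2)$.

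Concatenating the two stages (inversions commute, so the union is a valid $3$-decycling set of $T$) gives
\[ {\rm inv}_3(T) \le |\mathcal{C}| + \tfrac{\alpha}{2}n^2 + o(n^2) = \left(\tfrac{\zeta}{q(q-1)} + \tfrac{1}{8} - \tfrac{3\zeta}{2q(q-1)}\right)n^2 + o(n^2) = \left(\tfrac{1}{8} - \tfrac{\zeta}{2q(q-1)}\right)n^2 + o(n^2), \]
which is the desired bound. All of the technical work has been absorbed into Lemma \ref{l:zeta}; the remaining ingredients are a union bound, the observation that each edge-disjoint triangle inversion costs three edges from $E(T_L(\pi))$, and the arithmetic above, so I do not foresee any obstacle at this stage.
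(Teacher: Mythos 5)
Your proof is correct and follows the same route as the paper: fix a random permutation, invoke Lemmas \ref{l:randpi-i} and \ref{l:zeta} to get the edge count and the edge-disjoint triangle packing simultaneously, invert the triangles to drop the density of $T_L(\pi)$ to $\alpha = \tfrac14 - \tfrac{3\zeta}{q(q-1)}$, and finish with Corollary \ref{coro:triangles}. This is exactly the plan the paper sketches after Corollary \ref{coro:triangles} and then invokes in its own (terser) proof of Corollary \ref{coro:final}, so nothing is missing.
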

\begin{proof}
	By Lemma \ref{l:randpi-i} and Lemma \ref{l:zeta}, for every  $n$-vertex tournament $T$, a random permutation $\pi$ of $V(T)$ satisfies whp: (i) 	$|E(T_L(\pi))|=(1+o(1))n^2/4$.
	(ii) $\nu_3(T_L(\pi)) \ge (1-o(1))\frac{\zeta n^2}{q(q-1)}$.
	Fixing such a $\pi$ which satisfies both requirements, defining $\beta := \zeta/q(q-1)$
	and using that ${\rm inv}_3(T) \le (1+o(1))n^2(\frac{1}{8}-\frac{\beta}{2})$ as shown in the paragraph following Corollary \ref{coro:triangles}, the present corollary follows.
\end{proof}

Following Corollary \ref{coro:final}, our remaining task is to find $q \ge 4$ and $\zeta$ such that $\zeta/q(q-1)$ is as large as feasibly computable (note: not all $q$ are possible; if $q$ is not a prime power, we do not have a construction of an orthogonal family of permutations). Let us be more formal about our computational task:
Suppose that $q \ge 4$ is such that an orthogonal family of permutations of sets of order $q$ exists.
For $Q \in \T_q$, let $\zeta(Q)$ be the maximum of ${\rm avg}_{P(Q)}(Q)$ where $P$ ranges over all
orthogonal families of permutations of $V(Q)$. Let $\zeta_q$ be the minimum of $\zeta(Q)$ taken over
all $Q \in \T_q$. Hence, we would like to compute $\zeta_q$ or at least obtain a close lower bound for it,
as any such lower bound $\zeta$ can be applied in Corollary \ref{coro:final}.

\begin{algorithm}
	\caption{Computing a lower bound for $\zeta_q$}\label{alg:cap}
	\begin{algorithmic}
		\Require tournament database ${\mathcal D}_q$; orthogonal family $P \in {\mathcal P}_q$
		\State $\zeta \gets \infty$
		\ForAll{$Q \in  {\mathcal D}_q$}
		\State $best \gets -1$
		\State $loop \gets 0$
		\While{$loop < 1000$ {\bf and}  $best < \zeta$}
		\State $loop \gets loop+1$
		\State $\pi \gets$  random permutation of $[q]$
		\State $P^* \gets \{\pi\sigma \,:\, \sigma \in P\}$
		\State $sum \gets 0$
		\ForAll{$\sigma  \in  P^*$}
		\State $G \gets Q_L(\sigma)$ \Comment{Construct the graph $Q_L(\sigma)$}
		\State $sum \gets sum + LPSolve(G)$ \Comment{Compute $\nu^*(Q_L(\sigma))$ via an LP package}
		\EndFor
		\State $avg \gets sum/(q(q-1))$ \Comment{Compute ${\rm avg}_{P^*}(Q)$}
		\If{$avg > best$}
		\State $best \gets avg$
		\EndIf
		\EndWhile
		\If{$best < \zeta$}
		\State $\zeta \gets best$
		\EndIf
		\EndFor
		\State \Return $\zeta$
	\end{algorithmic}
\end{algorithm}

So, suppose we are given a database ${\mathcal D}_q$ of all tournaments on $q$ vertices, each labeled on $[q]$, i.e., for each $Q \in \T_q$, there is precisely one element of ${\mathcal D}_q$ that is isomorphic to it.
For example, such a database for all $q \le 10$ is given in \url{https://users.cecs.anu.edu.au/~bdm/data/digraphs.html}. Furthermore, suppose that ${\cal P}_q$ is
the set of all orthogonal families of permutations of $[q]$. Then, for each $Q \in {\mathcal D}_q$,
for each $P \in {\cal P}_q$ and for each $\sigma \in P$, we can easily compute the graph $Q_L(\sigma)$
and then construct a linear program to determine $\nu^*(Q_L(\sigma))$, thus determining ${\rm avg}_{P(Q)}(Q)$, consequently determining $\zeta(Q)$, consequently determining $\zeta_q$.
The number of operations of this approach is at least $|{\mathcal D}_q||{\mathcal P}_q|q(q-1)L_q$
where $L_q$ is the time to run a single linear program; the latter is non-negligible as the program may have
size $\Theta(q^3)$ (the number of possible triangles in $Q_L(\sigma)$).
The values of $|{\mathcal D}_q|$ follow the sequence OEIS A000568 \cite{OEIS}, so we have, e.g.,
$|{\mathcal D}_{11}|=903753248$ and $|{\mathcal D}_9|=191536$.
We see that already for $q=11$ we need to make at least $|{\mathcal P}_{11}|99412857280$ calls to a linear program of nontrivial size (as most calls involve linear programs with over $100$ variables)
which is overwhelmingly huge (even if the search space were to be trimmed by employing some heuristics,
e.g., as we may settle for a lower bound for $\zeta_q$, we don't need to examine all of the utterly huge $|{\mathcal P}_{11}|$,
rather just a few of its members, but even this is not feasible already for a single member).
For $q=9$ (recall, for $q=10$ we do not know of an orthogonal family), the number of calls to a linear program
(again, not a very small one) is $|{\mathcal P}_9|13790592$ which becomes feasible if instead of
going over all $|{\mathcal P}_9|$, we only scan a few members of it. Similarly, we can do the same for
$q=4,5,7,8$. Indeed, this is what our program does; its pseudocode is given in Algorithm \ref{alg:cap}
and its code can be obtained from
\url{https://github.com/raphaelyuster/decycling/blob/main/decycling_latin.cpp}.

As can be seen from the pseudocode, as well as the code, we start with some fixed orthogonal family $P$.
For example, if $q=9$ we use the one in Table \ref{table:orthogonal}. For each $Q \in {\mathcal D}_q$
we generate a constant, say $1000$, orthogonal families of the form
$P^* = \left\{\pi\sigma\,:\, \sigma \in P\right\}$ where $\pi$ is a random permutation of $[q]$
and for each such family we compute $avg_{P^*}(Q)$, taking the best (i.e., highest over all $1000$ trials) result that we find as a lower bound for
$\zeta(Q)$, and setting $\zeta$ to be the minimum of these lower bounds, taken over all $Q \in {\mathcal D}_q$.
We summarize the result of the program runs in Table \ref{table:zeta}.
As can be seen, for $q=9$ we obtain a value of $\zeta=\frac{67}{18}$ which implies 
the constant $\frac{257}{2592}$ for $\frac{1}{8}-\frac{\zeta}{2q(q-1)}$, completing the proof of
Theorem \ref{t:triangle}. \qed

\begin{table}
	\renewcommand{\arraystretch}{1.2}
	\centering
	\begin{tabular}{c||c|c}
		$q$ &  $\zeta$ & $\frac{1}{8}-\frac{\zeta}{2q(q-1)}$ \\
		\hline
		$4$ & $\frac{1}{3}$ & $\frac{1}{9} \approx 0.1111$ \\
		\hline
		$5$ & $\frac{7}{10}$ & $\frac{43}{400} = 0.1075$ \\
		\hline
		$7$ & $\frac{27}{14}$ & $\frac{5}{49} \approx 0.1021$ \\
		\hline
		$8$ & $\frac{153}{56}$ & $\frac{631}{6272} \approx 0.1006$ \\
		\hline
		$9$ & $\frac{67}{18}$& $\frac{257}{2592} \approx 0.0992$
	\end{tabular}
	\caption{The values of $\zeta \le \zeta_q$ for various choices of $q$ obtained from the program whose pseudocode is given in Algorithm \ref{alg:cap}, and the resulting constant from Corollary \ref{coro:final}.}
	\label{table:zeta} 
\end{table} 

\section{Larger $k$}\label{sec:cliques}

In this section we prove Theorem \ref{t:inv-k}. Starting with the lower bound, our aim is to construct an $n$-vertex tournament $T$ for which ${\rm inv}_k(T)$ is large. 
\begin{lemma}\label{l:q}
	For all sufficiently large $k$, there exists a tournament $Q$ on $q=k^2$ vertices such that:\\
	(i) ${\rm inv}_2(Q) \ge \frac{q^2}{4}-2q^{3/2}$\,;\\
	(ii) For any set $R$ of $r$ vertices for which $1.04^r  \ge q$, it holds that ${\rm inv}_2(Q[R]) \ge \frac{1}{4}\binom{r}{2}$\;.
\end{lemma}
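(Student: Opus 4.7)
The plan is to construct $Q$ via the probabilistic method: sample a uniformly random tournament on $q = k^2$ vertices and argue that both (i) and (ii) hold simultaneously with probability $1-o(1)$.

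For property (i), this is essentially the content of the Spencer--de la Vega lower bound on ${\rm inv}_2(q)$ quoted in the introduction: for a random $Q$, every linear ordering $\pi$ of $V(Q)$ leaves at least $q^2/4 - 2q^{3/2}$ backward edges whp, so ${\rm inv}_2(Q) \ge q^2/4 - 2q^{3/2}$ whp. I would invoke that result directly.

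For property (ii), I would use a direct Chernoff plus two-level union bound. Fix $R \subseteq V(Q)$ of size $r$ and a linear ordering $\pi$ of $R$: the number of forward edges of $Q[R]$ with respect to $\pi$ is distributed as Binomial$(\binom{r}{2},1/2)$, so Chernoff in its KL-divergence form gives
$$\Pr\!\left[\text{forward edges} \ge \tfrac{3}{4}\tbinom{r}{2}\right] \le \exp\!\bigl(-c\tbinom{r}{2}\bigr),$$
where $c = D(\tfrac{3}{4} \,\|\, \tfrac{1}{2}) \approx 0.131$ (in nats). Union bounding over the $r!$ orderings of $R$ and the $\binom{q}{r} \le q^r$ choices of $R$, and using the hypothesis $\ln q \le r \ln 1.04 < 0.04\, r$, the failure probability for a single value of $r$ is at most
$$\exp\!\bigl(r \ln q + r \ln r - \tfrac{c}{2}\, r(r-1)\bigr) \le \exp\!\bigl(-\Omega(r^2)\bigr),$$
because the Chernoff rate $\tfrac{c}{2} \approx 0.066$ strictly exceeds the entropy rate $\ln 1.04 \approx 0.039$. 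Summing over $r \in [\lceil \ln q / \ln 1.04 \rceil, q]$ yields total failure probability $o(1)$.

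Combining the two estimates, a random $Q$ satisfies both (i) and (ii) whp, and the lemma follows for all sufficiently large $k$. The only delicate point is the numerical balance in property (ii) between the Chernoff rate and the union-bound entropy: the constant $1.04$ in the hypothesis is chosen small enough to give a comfortable margin ($0.066$ vs.\! $0.039$), and in fact any base strictly below $\exp(c/2) \approx 1.068$ would work. The rest of the argument is routine concentration and counting.
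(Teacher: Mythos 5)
Your proof is correct and follows essentially the same route as the paper's: take a uniformly random tournament on $q=k^2$ vertices, invoke de la Vega for part (i), and for part (ii) fix a subset $R$ and an ordering, apply Chernoff to the Binomial$(\binom{r}{2},\tfrac12)$ count, and union-bound over the $r!$ orderings and $\binom{q}{r}$ subsets for each $r$. The only cosmetic difference is the Chernoff constant: the paper uses the crude bound $\exp(-\tfrac18\binom{r}{2})$ and checks $\ln 1.05 < 1/16$, while you use the sharper KL-divergence form $\exp(-c\binom r2)$ with $c=D(\tfrac34\|\tfrac12)\approx 0.131$; both comfortably beat the entropy rate $\ln 1.04$, and you correctly note that the $r\ln r$ term from the union bound is lower order (indeed one can avoid it entirely by bounding $\binom{q}{r}\,r!\le q^r$ directly, which the paper's $(qr)^r$ also does not exploit). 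In short, same construction, same two-level union bound, same critical numerical balance.
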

\begin{proof}
	Let $Q$ be a random tournament on $q$ vertices. It suffices to prove that each of the two items in the lemma's statement holds with probability at least, say, $\frac{2}{3}$, as then they both hold with positive probability, implying $Q$'s existence.
	As for the first item, the result of de la Vega \cite{delavega-1983} states that
	with probability $1-o_q(1)$ it holds that 
	${\rm inv}_2(Q) \ge \frac{1}{2}\binom{q}{2}-1.73q^{3/2}$, implying in particular that for all $k$ sufficiently large, with probability at least $\frac{2}{3}$ it holds that ${\rm inv}_2(Q) \ge \frac{q^2}{4}-2q^{3/2}$.
	
	For the second item, fix a subset $R \subseteq V(Q)$ with $r=|R|$ where $1.04^r \ge q$.
	As $Q$ is a random tournament, so is its sub-tournament $Q[R]$.
	Let $\pi$ be a permutation of $R$ and consider ${Q[R]}_L(\pi)$, which, in turn is the undirected random graph $G(r,\frac{1}{2})$. The expected number of edges of ${Q[R]}_L(\pi)$ is therefore $\frac{1}{2}\binom{r}{2}$ and the probability of this number being smaller than
	$\frac{1}{4}\binom{r}{2}$ (which is precisely half the expectation)  is, by Chernoff's inequality, at most $\exp(-\frac{1}{8}\binom{r}{2})$.
	As there are only $r!$ possibly $\pi$ to consider, we have that the probability that
	${\rm inv}_2(Q[R]) < \frac{1}{4}\binom{r}{2}$ is at most $r!\exp(-\frac{1}{8}\binom{r}{2})$.
	Now, for any $r$, the number of possible subsets $R$ is $\binom{q}{r}$, so the probability that
	(ii) fails is at most
	$$
		\Pr[(ii) ~{\rm fails}] \le \sum_{r=\lceil\log_{1.04} q\rceil}^{q} \binom{q}{r} r!e^{-\frac{1}{8}\binom{r}{2}}
		\le q (qr)^r e^{-\frac{1}{8}\binom{r}{2}} \le 1.05^{r^2} e^{-\frac{1}{8}\binom{r}{2}}  \ll \frac{1}{3}
	$$
where we have used that $1.04^r \ge q$, that $1.05e^{-1/16} < 1$ and that $k$ (hence $q$, hence $r$) is sufficiently large.
\end{proof}

Let $R$ be an $r$-vertex tournament and suppose that $n$ is a multiple of $r$.
An $n$-vertex {\em balanced blowup} of $R$ is obtained by replacing each vertex $i \in V(R)$
with a set $V_i$ of size $n/r$ (all the $V_i$'s are pairwise disjoint), and constructing a tournament
with vertex set $\cup_{i \in V(R)} V_i$ as follows. For each edge $(i,j) \in E(R)$, all edges are oriented from $V_i$ to $V_j$ (we call such edges {\em outer}), and for each $V_i$, all edges with both endpoints in $V_i$ are oriented arbitrarily (we call such edges {\em inner}).
\begin{lemma}\label{l:blowup}
	Suppose $R$ is an $r$-vertex tournament having ${\rm inv}_2(R)=m$ and let $Z$ be an $n$-vertex balanced blowup of a $R$.
	Then for any permutation $\sigma$ of $V(Z)$, the graph $Z_L(\sigma)$ contains at least
	$mn^2/r^2$ outer edges.
\end{lemma}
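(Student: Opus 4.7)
The plan is to reduce the statement to the defining property of ${\rm inv}_2(R)$ by extracting, from $\sigma$, a random permutation $\pi$ of $V(R)$ whose expected number of left-to-right edges of $R$ is exactly what controls the number of outer edges of $Z_L(\sigma)$.

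First I would set up the relevant counts. For each ordered pair of distinct indices $i, j \in V(R)$, let
$$
p_{ij} = \frac{|\{(u,v)\in V_i\times V_j : \sigma(u) < \sigma(v)\}|}{(n/r)^2},
$$
so that $p_{ij}+p_{ji}=1$. For each $(i,j)\in E(R)$, the outer edges of $Z$ arising from the block pair $\{i,j\}$ all point from $V_i$ to $V_j$, and such an edge lies in $Z_L(\sigma)$ precisely when $\sigma(u)<\sigma(v)$. Hence the block pair $\{i,j\}$ contributes exactly $(n/r)^2 p_{ij}$ outer edges to $Z_L(\sigma)$, and summing over $E(R)$,
$$
\#\{\text{outer edges in } Z_L(\sigma)\} = \frac{n^2}{r^2}\sum_{(i,j)\in E(R)} p_{ij}.
$$

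Next I would produce a random permutation of $V(R)$ as follows: independently for each $i\in V(R)$, pick a uniform representative $u_i\in V_i$; the values $\sigma(u_i)$ are automatically distinct because the $V_i$ are disjoint and $\sigma$ is a permutation, so ordering $V(R)$ by $\sigma(u_i)$ defines a permutation $\pi$ of $V(R)$ unambiguously. For any edge $(i,j)\in E(R)$, independence of $u_i$ and $u_j$ gives $\Pr[\pi(i)<\pi(j)] = \Pr[\sigma(u_i)<\sigma(u_j)] = p_{ij}$, so by linearity of expectation $\mathbb{E}[|E(R_L(\pi))|] = \sum_{(i,j)\in E(R)} p_{ij}$.

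Finally I would apply the hypothesis ${\rm inv}_2(R)=m$. For \emph{every} permutation $\pi$ of $V(R)$, reversing the edges of $R_L(\pi)$ produces the transitive tournament on the reverse of $\pi$; hence $R_L(\pi)$ is a $2$-decycling set of $R$ and $|E(R_L(\pi))|\ge m$ deterministically. Taking expectations, $\sum_{(i,j)\in E(R)} p_{ij} \ge m$, and substituting into the displayed identity yields at least $mn^2/r^2$ outer edges in $Z_L(\sigma)$, as required. There is no real obstacle here: once one notices that a uniformly random choice of representative from each block produces an honest permutation of $V(R)$ whose left-to-right pairs are in exact probabilistic correspondence with the $p_{ij}$, the lower bound is immediate from the definition of ${\rm inv}_2(R)$; the role of $\sigma$ being a \emph{permutation} (rather than an arbitrary order structure) is only to ensure that no ties arise among the $\sigma(u_i)$.
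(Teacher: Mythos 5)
Your argument is correct and is essentially the paper's proof recast in probabilistic language: the paper double-counts outer edges of $Z_L(\sigma)$ over all transversals of the blocks, using that each transversal induces a copy of $R$ whose left-going edges form a feedback edge set (hence at least $m$ of them), while you take a uniformly random transversal, note that it yields a random permutation $\pi$ of $V(R)$ with $\mathbb{E}[|E(R_L(\pi))|]=\sum_{(i,j)\in E(R)} p_{ij}$, and use the same deterministic bound $|E(R_L(\pi))|\ge m$. The two computations are identical up to dividing by the number of transversals.
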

\begin{proof}
	Consider some permutation $\sigma$ of $V(Z)$ and the corresponding $Z_L(\sigma)$. Recall that $V(Z)$ is the disjoint union of $r$ sets $V_i$ for $i \in V(R)$ and let $W$ be a transversal of the $V_i$'s.
	Then, by construction, $Z[W]$ is isomorphic to $R$. So, the subgraph of $Z_L(\sigma)$ induced by $W$
	contains at least ${\rm inv}_2(R)=m$ outer edges. By double-counting over all possible transversals, we have
	that $Z_L(\sigma)$ contains at least $mn^2/r^2$ outer edges.
\end{proof}

Fix a tournament $Q$ on vertex set $[q]$ where $q=k^2$, satisfying both items of Lemma \ref{l:q}.
Let $T$ be an $n$-vertex balanced blowup of $Q$ (we may assume than $n$ is a multiple of $q$ as removing a constant number of vertices from a tournament $T$ on $n$ vertices only decreases ${\rm inv}_k(T)$ by $o(n^2)$ so does not affect the asymptotic claim of Theorem \ref{t:inv-k}).
We will show that ${\rm inv}_k(T)$ is at least as large as the claimed lower bound in Theorem \ref{t:inv-k}.

\begin{proof}[Proof of Theorem \ref{t:inv-k}, lower-bound]
Suppose $\{X_1,\ldots,X_t\}$ is a $k$-decycling set of $T$ where $t={\rm inv}_k(T)$.
Listing the elements of $T$ in the order they appear in the resulting transitive tournament corresponds to some permutation $\pi$ of $V(T)$ for which all edges of $T_L(\pi)$ have been reversed.
We will show that we need $t$ to be rather large in order to reverse all edges of $T_L(\pi)$ for {\em any} possible $\pi$. So, fix some such $\pi$ and partition the edges of $T_L(\pi)$ into two parts $E_{\rm in}$ and
$E_{\rm out}$ where the former are the edges of $T_L(\pi)$ that are inner edges of $T$ and the latter are the edges of $T_L(\pi)$ that are outer edges of $T$.

We next lower-bound $|E_{\rm out}|$. By Lemma \ref{l:blowup}, using $R=Q$, $r=q$, $m \ge q^2/4-2q^{3/2}$ (which follows from Lemma \ref{l:q} Part (i)) $Z=T$ and $\sigma=\pi$, we have
\begin{equation}\label{e:out}
|E_{\rm out}| \ge \frac{n^2}{q^2}\left(\frac{q^2}{4}-2q^{3/2} \right) = \frac{n^2}{4}-\frac{2n^2}{k}.
\end{equation}

Now, consider some $X$ from the aforementioned $k$-decycling set.
We will show that $X$ reverses at most $\binom{k}{2}- \Omega(k^2/\log^4 k)$ edges of $E_{\rm out}$.
Let $|X|=x$ and suppose first that $x \le k/2$. In this case we can use the trivial fact that $X$ reverses at most $\binom{x}{2} \le k^2/4$ edges. Assume therefore that $k/2 \le x \le k$.
Recall that $V(T)$ consists of $q$ parts $V_1,\ldots,V_q$.
Let $W_i = V_i \cap X$ for $1 \le i \le q$.
Suppose next that some $W_i$ has size at least $k/\log^2 k$. In this case, we see that $X$ induces at least $\binom{|W_i|}{2}$ inner edges, implying that $X$ reverses at most $\binom{k}{2}- \Omega(k^2/\log^4 k)$ edges of $E_{\rm out}$.
Thus, we may assume that $0 \le |W_i| \le k/\log^2 k$ for all $1 \le i \le q$.

Partition the $W_i$ into {\em bunches} according to their size.
For $1 \le j \le \lfloor \log k \rfloor$ we say that $W_i$ is in bunch $j$ if $2^{j-1} \le |W_i| < 2^j$.
Letting $B_j$ be the union of all the $W_i$ in bunch $j$, we have
$\sum_{j=1}^{\lfloor \log k \rfloor} |B_j| = x$.
Let $j^* = \lfloor \log k - 3\log \log k \rfloor$. Consider first the case that $\sum_{j=1}^{j^*} |B_j| \le x/2$.
In this case, the last $\lceil 3\log \log k \rceil$ bunches contain together at least $x/2$ vertices of $X$.
But since in each such bunch it holds that $|W_i| \ge 2^{j^*-1} = \Omega(k/\log^3 k)$,
it induces at least $\Omega(k^2/\log^6 k)$ inner edges. Furthermore, as each $|W_i| \le k/\log^2 k$
and since $x/2 \ge k/4$,  there are $\Omega(\log^2 k)$ such $W_i$ in the last $\lceil 3\log \log k \rceil$ bunches, so together they induce
at least $\Omega(\log^2 k \cdot k^2/\log^6 k) = \Omega(k^2/\log^4 k)$ inner edges,
implying that $X$ reverses at most $\binom{k}{2}- \Omega(k^2/\log^4 k)$ edges of $E_{\rm out}$.

We remain with the case that $\sum_{j=1}^{j^*} |B_j| \ge x/2$, 
so there is some bunch $j$ with $1 \le j \le j^*$ for which $|B_j| \ge x/2j^* \ge x/2 \log k$,
and we shall focus on that bunch. Notice that since each $W_i$ in bunch $j$ has size at most
$2^j$, this means that the number of $W_i$ in bunch $j$ is at least
$$
\frac{x}{2^{j+1} \log k} \ge \frac{k}{2^{j+2} \log k} \ge \frac{k}{2^{j^*+2} \log k} \ge \frac{\log^2 k}{4}\;.
$$
Let $R = \{i \,:\, W_i {\rm~is~in~bunch~} j \}$, noticing that $R \subseteq [q]=V(Q)$ and that
$r=|R| \ge \frac{k}{2^{j+2} \log k}  \ge \frac{\log^2 k}{4}$ by the last inequality.
Observe that if $i \in R$ then $|W_i| \ge 2^{j-1}$, so let $W_i^* \subseteq W_i$ be chosen such that
$|W_i^*|=2^{j-1}$. By Lemma \ref{l:q}, item (ii), we have that ${\rm inv}_2(Q[R]) \ge \frac{1}{4}\binom{r}{2}$.
Notice also that the union of the $W_i^*$ for $i \in R$ is an $r2^{j-1}$ balanced blowup of $Q[R]$.
Hence, by Lemma \ref{l:blowup}, with $R=R[Q]$, $m \ge \frac{1}{4}\binom{r}{2}$, $Z$ being the subgraph of
$T[X]$ induced by the union of the $W_i^*$, and $\sigma=\pi^{reverse}$ we have that $Z_L(\sigma)$
contains at least $\frac{1}{4}\binom{r}{2}2^{2j-2}$ outer edges. Notice that all of these outer edges
do not appear in $T_L(\pi)$ so are not reversed by $X$. It follows that $X$ reverses at most
$$
\binom{k}{2} - \frac{1}{4}\binom{r}{2}2^{2j-2} \le \binom{k}{2} - \frac{1}{10}\left(\frac{k}{2^{j+2}\log k}\right)^2 2^{2j-2} = \binom{k}{2} - \Omega(k^2/\log^2 k)
$$
edges of $E_{\rm out}$.

It now follows from \eqref{e:out} and form the fact that each $X$ element in the decycling set reverses at most
$\binom{k}{2}- \Omega(k^2/\log^4 k)$ edges of $E_{\rm out}$, that the number of elements of the decycling set must
be
$$
{\rm inv_k}(n) \ge {\rm inv_k}(T) \ge \frac{\frac{n^2}{4}-\frac{2n^2}{k}}{\binom{k}{2}- \Omega(k^2/\log^4 k)}\;.
$$
The existence of $\delta_k > 0$ for all $k \ge k_0$ is now guaranteed since for all sufficiently large $k$,
$$
\frac{\frac{1}{4}-\frac{2}{k}}{\binom{k}{2}- \Omega(k^2/\log^4 k)} > \frac{1}{2k(k-1)}\;.
$$
\end{proof}

We now turn to proving the upper bound in Theorem \ref{t:inv-k}.
We start with the following lemma which is analogous to Lemma \ref{l:c4}.
\begin{lemma}\label{l:kk}
	Let $T^*$ be a tournament and suppose that $k \ge 4$ is even and $T^*_L(\pi)$ has a (not necessarily induced) copy of $K_{k,k}$ or that $k \ge 5$ is odd and $T^*_L(\pi)$ has a (not necessarily induced) copy of $K_{k+1,k-1}$. Let $H$ denote the corresponding copy. Then, there are four sets of vertices $X,Y,Z,W$ of $k$ vertices each, such that inverting $\{X,Y,Z,W\}$ reverses the direction of the edges of $H$ without
	affecting the direction of any other edge of $T^*$.
\end{lemma}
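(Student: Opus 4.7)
The plan is to mimic the idea behind Lemma \ref{l:c4}, where two 3-sets reversed a 4-cycle because each edge of the cycle ended up in exactly one of the two sets while each non-cycle pair lay in either zero or two of them. The generalization here is to find four $k$-sets $X,Y,Z,W$ such that each edge of $H$ lies in exactly one of them while every other pair of vertices lies in zero or two, so parity ensures that only the edges of $H$ are reversed.

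To achieve this, let $A \cup B$ be the bipartition of $H$. When $k$ is even, $|A|=|B|=k$, and I would partition $A = A_1 \cup A_2$ and $B = B_1 \cup B_2$ with $|A_1|=|A_2|=|B_1|=|B_2|=k/2$. When $k$ is odd, $|A|=k+1$ and $|B|=k-1$ are both even, and I would partition $A = A_1 \cup A_2$ with $|A_i|=(k+1)/2$ and $B = B_1 \cup B_2$ with $|B_i|=(k-1)/2$. In either case I set
\[
X = A_1 \cup B_1,\quad Y = A_1 \cup B_2,\quad Z = A_2 \cup B_1,\quad W = A_2 \cup B_2,
\]
each of which has exactly $k$ vertices by design.

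The verification is then a routine parity check. For a cross pair $a \in A_i$, $b \in B_j$: both endpoints belong to precisely one of the four sets (the one indexed by $(i,j)$), so the corresponding edge of $H$ is inverted exactly once. For $a,a' \in A_i$ both endpoints lie in exactly the two sets containing $A_i$, so the edge is inverted twice and unchanged. For $a \in A_1$, $a' \in A_2$ there is no set containing both, so nothing happens. The analogous statements hold inside $B$, and any pair with at least one endpoint outside $A \cup B$ is trivially untouched since no such vertex appears in any of $X,Y,Z,W$. Thus inverting $\{X,Y,Z,W\}$ reverses precisely the edges of $H$, as required.

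The only real design constraint is that the four sets must have size exactly $k$, which is why the even and odd cases require slightly different bipartitions of $H$; once the sizes of $|A|, |B|, |A_i|, |B_i|$ are chosen with matching parities (which is why we take $K_{k+1,k-1}$ rather than $K_{k,k}$ in the odd case), the construction and its verification go through uniformly. No further obstacle arises.
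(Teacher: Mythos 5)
Your construction is exactly the paper's: with the paper's notation $A,B,C,D$ for the half-parts, the four inverted sets are $A\cup C$, $A\cup D$, $B\cup C$, $B\cup D$, which are precisely your $A_1\cup B_1$, $A_1\cup B_2$, $A_2\cup B_1$, $A_2\cup B_2$. Your parity verification is correct and in fact spells out in more detail what the paper leaves to the reader.
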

\begin{proof}
	Suppose first that $k$ is even and that $H$ is a copy of $K_{k,k}$. Consider the bipartition of $H$, denoting the vertices of one part by $v_1,\ldots,v_k$ and the other part by $u_1,\ldots,u_k$.
	Let $A=\{v_1,\ldots,v_{k/2}\}$, $B=\{v_{k/2+1},\ldots,v_k\}$, $C=\{u_1,\ldots,u_{k/2}\}$, $D=\{u_{k/2+1},\ldots,u_k\}$. Inverting $\{A \cup C, A \cup D, B \cup C, B \cup D\}$ reverses the direction of the $k^2$ edges of $H$
	without affecting the direction of any other edge of $T^*$.
	Suppose next that $k$ is odd and that $H$ is a copy of $K_{k+1,k-1}$. Consider the bipartition of $H$, denoting the vertices of one part by $v_1,\ldots,v_{k+1}$ and the other part by $u_1,\ldots,u_{k-1}$.
	Let $A=\{v_1,\ldots,v_{(k+1)/2}\}$, $B=\{v_{(k+1)/2+1},\ldots,v_{k+1}\}$, $C=\{u_1,\ldots,u_{(k-1)/2}\}$, $D=\{u_{(k-1)/2+1},\ldots,u_k\}$. Inverting $\{A \cup C, A \cup D, B \cup C, B \cup D\}$ reverses the direction of the $k^2-1$ edges of $H$
	without affecting the direction of any other edge of $T^*$.
\end{proof}
The next lemma is a simple consequence of Tur\'an's Theorem and Ramsey's Theorem.
\begin{lemma}\label{l:tr}
	For every fixed $k \ge 3$ there exists $\gamma_k > 0$ such that in any $2$-coloring of the edges of
	$K_n$, there are at least $(1-o(1))\gamma_k n^2$ pairwise edge-disjoint monochromatic copies of $K_k$, all of the same color.
\end{lemma}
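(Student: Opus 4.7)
The plan is to combine Ramsey's theorem with a near-decomposition of $K_n$ into cliques, in the spirit of the argument already used in the proof of Proposition \ref{prop:random}. Let $R=R(k,k)$ denote the diagonal Ramsey number, so that every $2$-edge-coloring of $K_R$ contains a monochromatic $K_k$. I will show that one may take $\gamma_k = 1/(4\binom{R}{2})$.

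First, apply the Frankl--R\"odl near-packing theorem (equivalently, one may invoke Lemma \ref{l:hr} with the host graph $K_n$ and clique size $R$, noting that the fractional $K_R$-packing which gives weight $1/\binom{n-2}{R-2}$ to every copy of $K_R$ has value $\binom{n}{2}/\binom{R}{2}$) to find a collection of pairwise edge-disjoint copies of $K_R$ in $K_n$ that together cover all but $o(n^2)$ edges. This gives a family $\mathcal{F}$ of at least $(1-o(1))\binom{n}{2}/\binom{R}{2}$ pairwise edge-disjoint copies of $K_R$.

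Now fix any $2$-coloring of the edges of $K_n$. For each $F \in \mathcal{F}$, the coloring restricted to $F$ is a $2$-coloring of $K_R$, and by the definition of the Ramsey number we may choose a monochromatic $K_k$ inside $F$; call this copy $M_F$ and record its color. Since the copies in $\mathcal{F}$ are pairwise edge-disjoint, so are the copies $\{M_F\}_{F\in\mathcal{F}}$. By pigeonhole, at least $|\mathcal{F}|/2$ of the $M_F$ share a common color. Keeping only those, we obtain at least
\[
\frac{1}{2}(1-o(1))\frac{\binom{n}{2}}{\binom{R}{2}} = (1-o(1))\,\gamma_k\, n^2
\]
pairwise edge-disjoint monochromatic $K_k$'s of the same color, with $\gamma_k = 1/(4\binom{R(k,k)}{2})>0$, as required.

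There is essentially no obstacle here beyond invoking the right black box: the nontrivial input is the near-decomposition of $K_n$ into cliques of size $R$, which is exactly the statement already used earlier in the paper. Ramsey's theorem and a single pigeonhole step then do all the remaining work. Note also that no attempt is being made to optimize $\gamma_k$; any asymptotic bound on $R(k,k)$ (e.g.\ the standard $R(k,k)\le 4^k$) yields an explicit $\gamma_k$, but only the positivity of $\gamma_k$ is needed in the later application.
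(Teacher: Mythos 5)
Your proof is correct, but it takes a genuinely different route from the paper. The paper's argument is a greedy removal combined with Tur\'an's theorem: remove edge-disjoint monochromatic copies of $K_k$ until none remain; the remaining graph must be $K_{R(k)}$-free (else Ramsey gives another monochromatic $K_k$), so by Tur\'an $\Omega(n^2)$ edges were removed, hence $\Omega(n^2)$ copies were removed, and pigeonhole on color finishes. Your argument instead starts from a near-decomposition of $K_n$ into pairwise edge-disjoint copies of $K_R$ (justified via the fractional relaxation and Lemma~\ref{l:hr}, or equivalently the Frankl--R\"odl nibble already invoked in the proof of Proposition~\ref{prop:random}), then extracts a monochromatic $K_k$ from each $K_R$ block by Ramsey, and applies the same pigeonhole. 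Both proofs rest on Ramsey plus a final pigeonhole; the difference is that the paper's route is more elementary, needing only Tur\'an rather than the R\"odl-nibble machinery, whereas your route reuses heavier packing technology already available in the paper and yields a slightly cleaner (and in fact slightly better) explicit constant $\gamma_k = 1/(4\binom{R}{2})$ versus the paper's $1/(2R^2\binom{k}{2})$. Since only positivity of $\gamma_k$ matters downstream, either constant is equally usable, and your proof is complete and correct.
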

\begin{proof}
	Let $q=R(k) < 4^k$ be the diagonal Ramsey number of $k$. Consider some $2$-coloring of the edges of
	$K_n$, and remove monochromatic edge-disjoint copies of $K_k$ until none are left. We must have removed at least $n^2/q^2$ edges as otherwise by Tur\'an's Theorem, there is a $K_q$ on the non-removed edges, so a monochromatic $K_k$. The result follows for a suitable choice of $\gamma_k$ as at least half of the monochromatic removed $K_k$ are of the same color.
\end{proof}

\begin{proof}[Proof of Theorem \ref{t:inv-k}, upper-bound]
Let $T$ be an $n$-vertex tournament. Considering some random permutation $\pi$ of $V(T)$, we have that with high probability, $|E(T_L(\pi))| = (1+o(1))n^2/4$ and $|E(T_R(\pi))| = (1+o(1))n^2/4$.
By Lemma \ref{l:tr} one of $T_L(\pi)$ or $T_R(\pi)$ has at least  $(1-o(1))\gamma_k n^2$ pairwise edge-disjoint copies of $K_k$. Without loss of generality, assume this is $T_L(\pi)$. Removing a set of
$(1-o(1))\gamma_k n^2$ edge-disjoint copies of $K_k$ from $T_L(\pi)$ amounts to inverting this amount of $k$-sets of vertices,
such that after applying these inversions we obtain a tournament $T^*$ such that $T^*_L(\pi)$ has at
most $(1+o(1))n^2/4 - (1-o(1))\binom{k}{2}\gamma_k n^2$ edges.
By Lemma \ref{l:kk} we can repeatedly invert quartets of $k$-sets of vertices each, 
until we obtain a tournament $T^{**}$ for which $T^{**}_L(\pi)$ has no $K_{k,k}$ (when $k$ is even) or no $K_{k+1,k-1}$ (when $k$ is odd). The number of inversions performed starting at $T^*$ and arriving at $T^{**}$ is
therefore precisely $4(|E(T^*_L(\pi))|-|E(T^{**}_L(\pi))|)/k^2$ if $k$ is even and precisely
$4(|E(T^*_L(\pi))|-|E(T^{**}_L(\pi))|)/(k^2-1)$ if $k$ is odd.
By the Kov\'ari-S\'os-Tur\'an Theorem \cite{KST-1954}, we have $|E(T^{**}_L(\pi))| =O(n^{2-1/k})$.
We therefore obtain that
$$
{\rm inv}_k(T) \le (1+o(1))n^2 \left [\gamma_k + \frac{4}{k^2}\left(\frac{1}{4}-\binom{k}{2}\gamma_k\right)\right] =
(1+o(1))n^2 \left[ \frac{1}{k^2}-\epsilon_k \right]
$$
when $k$ is even and
$$
{\rm inv}_k(T) \le (1+o(1))n^2 \left [\gamma_k + \frac{4}{k^2-1}\left(\frac{1}{4}-\binom{k}{2}\gamma_k\right)\right] =
(1+o(1))n^2 \left[ \frac{1}{k^2-1}-\epsilon_k \right]
$$
for a suitable $\epsilon_k > 0$.
As $T$ is an arbitrary $n$-vertex tournament, we obtain (unifying the even and odd cases of $k$) that
$$
{\rm inv}_k(n) \le 
(1+o(1))n^2 \left[ \frac{1}{2\lfloor k^2/2\rfloor}-\epsilon_k \right]\;.
$$

\end{proof}

\end{document}